\documentclass[final,12pt]{colt2018} 

\newcommand{\R}{\mathbb{R}} 
\newcommand{\E}{\mathbb{E}}
\newcommand{\N}{\mathcal{N}} 
\newcommand{\HS}{\mathrm{HS}} 
\DeclareMathOperator{\Tr}{Tr}
\renewcommand{\part}[2]{\frac{\partial #1}{\partial #2}}
\newcommand{\I}{\mathrm{I}}
\DeclareMathOperator{\grad}{grad}
\DeclareMathOperator{\Hess}{Hess}
\newcommand{\T}{\mathsf{T}}
\newcommand{\M}{{M}}
\renewcommand{\P}{\mathcal{P}}
\DeclareMathOperator{\Var}{Var}
\newcommand{\bN}{\mathbb{N}}
\newcommand{\F}{\mathsf{F}}
\newcommand{\B}{\mathsf{B}}

\newtheorem{question}{Question}

\title[Sampling as optimization in the space of measures]{
Sampling as optimization in the space of measures: \\
The Langevin dynamics as a composite optimization problem}
\usepackage{times}

 \coltauthor{\Name{Andre Wibisono} \Email{aywibisono@wisc.edu}\\
 \addr Department of Electrical \& Computer Engineering \\
 University of Wisconsin -- Madison
 }

\begin{document}

\maketitle

\begin{abstract}
We study sampling as optimization in the space of measures.
We focus on gradient flow-based optimization with the Langevin dynamics as a case study.
We investigate the source of the bias of the unadjusted Langevin algorithm (ULA) in discrete time, and consider how to remove or reduce the bias.
We point out the difficulty is that the heat flow is exactly solvable, but neither its forward nor backward method is implementable in general, except for Gaussian data.
We propose the symmetrized Langevin algorithm (SLA), which should have a smaller bias than ULA, at the price of implementing a proximal gradient step in space.
We show SLA is in fact consistent for Gaussian target measure, whereas ULA is not.
We also illustrate various algorithms explicitly for Gaussian target measure with Gaussian data, including gradient descent, proximal gradient, and Forward-Backward, and show they are all consistent.
\end{abstract}

\section{Introduction}
We study sampling as optimization in the space of measures.
In this paper 
we focus on gradient flow-based optimization with the Langevin dynamics as a case study.
Our starting point is the key result of~\cite{JKO98} that the Langevin dynamics in space corresponds to the gradient flow of the relative entropy functional in the space of measures with the Wasserstein metric.
This is why running the Langevin dynamics is useful for sampling: It is the steepest descent flow that attracts any initial distribution to the stationary target measure.
Our motivating question is:
\begin{center}
{\em
Is there an {\bf implementable} discretization of the Langevin dynamics that is consistent and converges exponentially fast under the logarithmic Sobolev inequality?
}
\end{center}

Recall in general, gradient flow converges exponentially fast under a gradient-domination condition that is weaker than strong convexity.
In the space of measures with the relative entropy functional, this gradient-domination condition is the logarithmic Sobolev inequality (LSI)~\cite[]{OV00}.
Therefore, in continuous time, the Langevin dynamics converges exponentially fast under LSI.
In discrete time, the situation is less clear.
A basic discretization known as the unadjusted Langevin algorithm (ULA) is {\em biased}, which means it converges to a limit different from the target measure.
This bias exists for arbitrarily small (fixed) step size, even for a Gaussian target measure.
This led to proposals to correct the bias, such as the Metropolis-Hastings correction step~\cite[]{RT96,DCWY18} or decreasing the step size; however, the resulting algorithms become more complicated, so here we focus on ULA in order to see the basic structure.

Even with the bias, we can use ULA to get a one-time sampling algorithm as follows:
given an error threshold, first choose a small enough step size so the bias is within the threshold, then run ULA with that step size to approximate convergence.
See for example~\cite{D17,D17a,DK17,DM16,CB17}.

The bias in the Langevin algorithm is actually puzzling.
The bias is typically attributed to the fact that ULA is a discretization of the continuous-time Langevin dynamics, so it necessarily has a discretization error.
However, this attribution is misplaced, because
the discretization error is the deviation between ULA and the Langevin dynamics at small time,
while
we are concerned with
the asymptotic bias of ULA at large time. 
And indeed it is possible for a discretization algorithm to be consistent (unbiased).
For example, gradient descent is a discretization of gradient flow; under the gradient-domination condition, both gradient flow and gradient descent converge to the minimizer exponentially fast.
Since the Langevin dynamics is a gradient flow and it converges exponentially fast under LSI, we expect the gradient descent version to also converge exponentially fast under LSI, hence our motivating question.
We note that the class of measures satisfying LSI is rather large and closed under bounded perturbation, so even multimodal distributions satisfy LSI.

Why is ULA biased?
It cannot be the gradient descent discretization of the Langevin dynamics.
Rather, it is performing the {\em Forward-Flow} (FFl) discretization, which in general is biased.
Here we observe that the problem of minimizing relative entropy in the space of measures is a composite optimization problem, which means the objective function is a sum of two terms.
Indeed, relative entropy can be written as the sum of negative entropy and the expected function value, where the function is the negative log density of the target measure.
ULA (FFl) is a two-step algorithm: First, it applies the forward method to the expected function value, which is implemented by the usual gradient descent step in space.
Second, it applies the exact gradient flow of the negative entropy, the heat flow, which is implemented by adding independent Gaussian noise in space.
Therefore, ULA is {\em implementable}, which means we can run it in space with a given sample,
but it is biased.
We can compute the bias explicitly for the Ornstein-Uhlenbeck (OU) process, which is the case of Gaussian target measure; see Example~\ref{Ex:OU2} below.

For a composite optimization problem, the algorithm of choice is the {\em Forward-Backward} (FB) algorithm, which is a composition of the forward method for one term and the backward method for the other.
The FB algorithm is consistent, which crucially uses the fact that the backward method is the adjoint of the forward method.
Furthermore, FB converges at exponential rate under the gradient-domination condition and some smoothness assumptions.
For our problem of optimizing relative entropy, FB means we want to run the forward method for the expected function value (which we can do via the usual gradient descent), and run the backward method for negative entropy (which we cannot do in general).
Indeed, the main interesting difficulty is that for negative entropy, its exact gradient flow (the heat flow) is implementable via Gaussian noise, but neither its forward nor backward method is implementable.\footnote{
This is the opposite of what typically happens in optimization, where we cannot run gradient flow but we can run the gradient descent algorithm. 
}
However, in one case, namely for Gaussian initial data, we can solve the backward method for negative entropy. 
Therefore, for the OU process with Gaussian initial data, we can solve the FB algorithm and see it is indeed consistent; see Example~\ref{Ex:OUFB}.

Finally, if we cannot remove the bias, we can try to reduce it.
We propose the {\em symmetrized Forward-Flow} (SFFl) algorithm, which is a composition of FFl and its adjoint.
The SFFl algorithm is symmetric and has order 2;
therefore, its bias is also of order 2, smaller than FFl.
Applying SFFl to the Langevin dynamics yields the {\em symmetrized Langevin algorithm} (SLA), which is a composition of ULA and its adjoint.
SLA requires being able to run the backward method for the expected function value, which is implemented by the proximal gradient step for the function in space.
This may require some numerical computation in each iteration, but the prize is a sampling algorithm that should have a smaller bias.
As an example, we show how to solve SLA explicitly for the OU process.
We see in this case SLA is in fact consistent; see Example~\ref{Ex:OUSLA}.
As another example, we show how to implement SLA for a mixture of two Gaussians as target measure, by solving an explicit one-dimensional calculation in each iteration.
We show using synchronous coupling under strong log-concavity that SLA converges exponentially fast to its limiting measure.

\section{Sampling as optimization in the space of measures}
\label{Sec:Samp}

Sampling can be formulated as optimization in the space of measures.
Indeed, to sample from a target distribution, it suffices to optimize an objective function in the space of measures that is minimized at the target distribution.
Thus, we can translate methods from optimization to sampling by applying them to the optimization problem in the space of measures, provided the resulting methods are implementable as (possibly stochastic) algorithms in space.

In principle, we can choose any objective function that is minimized at the target distribution.
But in the space of measures, there is a special function that works, which is the {\em relative entropy}:
\begin{align}\label{Eq:RelEnt}
H_\nu(\rho) = \int \rho \log \frac{\rho}{\nu}.
\end{align}
This is also known as the Kullback-Leibler (KL) divergence.
Here $\rho$ is a probability measure on $\R^n$ absolutely continuous with respect to $\nu$, and the integral above is a shorthand for $\int_{\R^n} \rho(x) \log \frac{\rho(x)}{\nu(x)} dx$.
Relative entropy is nonnegative: $H_\nu(\rho) \ge 0$; and it is minimized at the target measure: $H_\nu(\rho) = 0$ if and only if $\rho = \nu$.
Furthermore, $\nu$ is the only stationary point of $H_\nu$, even when $\nu$ is multimodal.
Therefore, if we can minimize $H_\nu$, then we can sample from $\nu$.

Let $\nu = e^{-f}$, or equivalently $f = -\log \nu$.
Relative entropy decomposes as
 a sum of two terms:
\begin{align}\label{Eq:RelEnt2}
H_\nu(\rho) = \E_\rho[f] -H(\rho)
\end{align}
where $\E_\rho[f] = \int \rho f$ is the expected value of $f$
and $-H(\rho) = \int \rho \log \rho$ is the negative entropy of $\rho$.

\subsection{The Langevin dynamics as the gradient flow of relative entropy}

We wish to minimize the relative entropy functional~\eqref{Eq:RelEnt} in the space of measures.
A general strategy to minimize a function is to run the gradient flow dynamics. 
This requires a metric structure~\citep{AGS08}.
In the space of measures over $\R^n$ there is a nice choice of metric, which is the Wasserstein metric induced by the quadratic distance function~\citep{Vil03,Vil08}.

In the space of measures with the Wasserstein metric, the gradient flow of relative entropy~\eqref{Eq:RelEnt} is
the following partial differential equation, known as the {\em Fokker-Planck equation}: 
\begin{align}\label{Eq:FP}
\part{\rho}{t} \,=\, \nabla \cdot \Big( \rho \nabla \log \frac{\rho}{\nu}\Big) \,=\, \nabla \cdot (\rho \nabla f) + \Delta \rho.
\end{align}
Here $\rho = \rho(x,t)$ is a smooth positive density evolving over time. 
This is the key result of~\cite{JKO98}, which has been extended to vast generalities~\citep{Vil08}.
So if we can follow the flow of the Fokker-Planck equation in the space of measures, then we converge to the target measure $\nu = e^{-f}$.
Furthermore, if $\nu$ satisfies the logarithmic Sobolev inequality (LSI), then the convergence is exponentially fast, see~$\S\ref{Sec:LSI}$.
However, can we implement this in space?

It turns out the Fokker-Planck equation is the continuity equation 
of the {\em Langevin dynamics}, which is the following stochastic differential equation in space:
\begin{align}\label{Eq:LD}
dX = -\nabla f(X) \, dt + \sqrt{2} \, dW.
\end{align}
Here $X = (X_t)_{t \ge 0}$ is a stochastic process and $W = (W_t)_{t \ge 0}$ is the standard Brownian motion in $\R^n$. 
That is, if $X_t \sim \rho_t$ evolves following the Langevin dynamics~\eqref{Eq:LD} in space, then $\rho(x,t) = \rho_t(x)$ evolves following the Fokker-Planck equation~\eqref{Eq:FP} in the space of measures; see for example~\cite[$\S11$]{Mac92}.

This means the Fokker-Planck flow~\eqref{Eq:FP} is implementable by the Langevin dynamics~\eqref{Eq:LD}, as long as we can follow the stochastic process~\eqref{Eq:LD} exactly.
In one case, namely for the Gaussian target measure, we know the exact solution; see Example~\ref{Ex:OU} below.
In general, we need to discretize to obtain an algorithm in space;
we discuss this further below.

\begin{example}[Ornstein-Uhlenbeck.]\label{Ex:OU}
Let $\nu = \N(\mu,\Sigma)$ be the Gaussian target measure with mean $\mu \in \R^n$ and covariance $\Sigma \succ 0$, so $f(x) = \frac{1}{2}(x-\mu)^\top \Sigma^{-1} (x-\mu) + \frac{1}{2} \log \det(2\pi\Sigma)$ is quadratic.
The Langevin dynamics~\eqref{Eq:LD} has a linear drift, and is known as the {\em Ornstein-Uhlenbeck} (OU) process:
$$dX = -\Sigma^{-1}(X-\mu) \, dt + \sqrt{2} \, dW.$$
This has an exact solution as an It\^o integral.
In particular, the solution at each time $t \ge 0$ satisfies
$$X_t-\mu \stackrel{d}{=} e^{-t \Sigma^{-1}} (X_0-\mu) + \Sigma^{\frac{1}{2}} \Big(I-e^{-2t\Sigma^{-1}}\Big)^{\frac{1}{2}} Z$$
where $Z \sim \N(0,I)$ is independent of $X_0$.
Thus, $X_t \sim \rho_t$ converges to $\N(\mu,\Sigma)$ exponentially fast.
\end{example}

\subsection{The unadjusted Langevin algorithm} 
\label{Sec:ULA}

A practical discretization of the Langevin dynamics~\eqref{Eq:LD} is the {\em unadjusted Langevin algorithm} (ULA):
\begin{align}\label{Eq:ULA}
x_{k+1} = x_k - \epsilon \nabla f(x_k) + \sqrt{2\epsilon} z_k
\end{align}
where $z_k \sim \N(0,I)$ is independent of $x_k$. 
Here $\epsilon > 0$ is a step size, which is equal to the time step in the discretization.

As $\epsilon \to 0$, the ULA iteration~\eqref{Eq:ULA} converges to the Langevin dynamics~\eqref{Eq:LD}~\citep[$\S11$]{Mac92}.
However, when $\epsilon > 0$, ULA is biased, which means it does not converge to the target distribution $\nu = e^{-f}$.
This bias is present even for Gaussian target measure, see Example~\ref{Ex:OU2}.

\begin{example}[ULA for OU.]\label{Ex:OU2}
Let $\nu = \N(\mu,\Sigma)$.
ULA is $x_{k+1} - \mu = (I - \epsilon \Sigma^{-1})(x_k-\mu) + \sqrt{2\epsilon} z_k$.
Unfolding and using the fact that the sum of independent Gaussians is Gaussian, 
we can write
$$x_k - \mu \stackrel{d}{=} A_\epsilon^k(x_0-\mu) + \sqrt{2\epsilon} (I-A_\epsilon^2)^{-\frac{1}{2}} (I-A_\epsilon^{2k})^{\frac{1}{2}} z$$
where $A_\epsilon = I - \epsilon \Sigma^{-1}$ and $z \sim \N(0,I)$ is independent of $x_0$.
For $0 < \epsilon < 2\lambda_{\min}(\Sigma)$, 
$\lim\limits_{k \to \infty} A_\epsilon^k = 0$.
Therefore, $x_k \stackrel{d}{\to} \mu + \sqrt{2\epsilon} (I-A_\epsilon^2)^{-\frac{1}{2}} z$.
Thus, ULA for OU has the limit measure
$$\nu_\epsilon = \N\big(\mu,\,\Sigma(I - \tfrac{\epsilon}{2} \Sigma^{-1})^{-1}\big).$$
At $\epsilon = 0$, this is the target measure $\nu = \N(\mu,\Sigma)$.
For $\epsilon > 0$, $\nu_\epsilon \neq \nu$, so ULA is biased.
The bias is
$$W_2(\nu, \nu_\epsilon) = \|\Sigma^{\frac{1}{2}}-\Sigma^{\frac{1}{2}}(I - \tfrac{\epsilon}{2} \Sigma^{-1})^{-\frac{1}{2}}\|_{\HS} \,=\, \frac{\epsilon}{4} \sqrt{\Tr(\Sigma^{-1})} + O(\epsilon^2).$$
where $\|B\|_{\HS} = \sqrt{\Tr(B^2)}$ is the Hilbert-Schmidt norm of a symmetric matrix $B$.
\end{example}
Above, we have used the formula for the Wasserstein distance between Gaussians~\cite[]{T11}.

\subsubsection{Convergence to the biased limit}

We recall the following contraction result using synchronous coupling under strong log-concavity; see~\cite[Lemma~1]{D17a} or Appendix~\ref{App:ULASynch}.
Note the similarity with the corresponding result in strongly convex optimization, e.g.,~\cite[Theorem~2.1.15]{Nesterov04}.

\begin{lemma}\label{Lem:ULASynch}
Suppose $\nu$ is $\alpha$-strongly log-concave and $L$-log-smooth ($\alpha I \preceq -\nabla^2 \log \nu \preceq LI$) for some $0 < \alpha \le L$.
Let $\rho_k$, $\rho_k'$ be any two distributions evolving following the ULA algorithm~\eqref{Eq:ULA}.
Then for $0 < \epsilon \le \frac{2}{\alpha+L}$,
$W_2(\rho_k,\rho_k')^2 \le \big(1-2\epsilon\frac{\alpha L}{\alpha+L}\big)^k \, W_2(\rho_0,\rho_0')^2.$
\end{lemma}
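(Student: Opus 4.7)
The plan is to prove the lemma by synchronous coupling: the noise term in ULA is linear and common to both chains, so if we couple the two chains by feeding them the \emph{same} Gaussian increments $z_k$, the stochastic parts cancel and we reduce to analyzing the deterministic gradient-descent map $T_\epsilon(x) = x - \epsilon \nabla f(x)$ pathwise.

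First I would fix an optimal coupling $(X_0, X_0')$ of $\rho_0$ and $\rho_0'$ achieving $\E\|X_0 - X_0'\|^2 = W_2(\rho_0,\rho_0')^2$, then define $X_{k+1} = X_k - \epsilon \nabla f(X_k) + \sqrt{2\epsilon}\, z_k$ and $X_{k+1}' = X_k' - \epsilon \nabla f(X_k') + \sqrt{2\epsilon}\, z_k$ with the \emph{same} noise $z_k \sim \N(0,I)$. Then $(X_k, X_k')$ is a valid coupling of $(\rho_k, \rho_k')$, and the noise cancels in the difference:
\begin{align*}
X_{k+1} - X_{k+1}' = \bigl(X_k - \epsilon \nabla f(X_k)\bigr) - \bigl(X_k' - \epsilon \nabla f(X_k')\bigr).
\end{align*}

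Next I would invoke the standard one-step contraction of gradient descent under $\alpha I \preceq \nabla^2 f \preceq L I$: for any $x,y \in \R^n$ and $0 < \epsilon \le \frac{2}{\alpha+L}$,
\begin{align*}
\|T_\epsilon(x) - T_\epsilon(y)\|^2 \le \Bigl(1 - 2\epsilon \tfrac{\alpha L}{\alpha+L}\Bigr) \|x-y\|^2.
\end{align*}
This is Nesterov's co-coercivity bound (Theorem 2.1.15 of \cite{Nesterov04}); the proof expands the square and uses $\langle \nabla f(x) - \nabla f(y), x-y\rangle \ge \frac{\alpha L}{\alpha+L} \|x-y\|^2 + \frac{1}{\alpha+L}\|\nabla f(x)-\nabla f(y)\|^2$, itself a consequence of applying strong convexity and smoothness to $f - \frac{\alpha}{2}\|\cdot\|^2$. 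Applying this pathwise to $(X_k, X_k')$ and taking expectations yields
\begin{align*}
\E \|X_{k+1} - X_{k+1}'\|^2 \le \Bigl(1 - 2\epsilon \tfrac{\alpha L}{\alpha+L}\Bigr)\, \E \|X_k - X_k'\|^2.
\end{align*}

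Iterating from $k=0$ and using $W_2(\rho_k,\rho_k')^2 \le \E\|X_k - X_k'\|^2$ (since $(X_k, X_k')$ is one coupling of $(\rho_k,\rho_k')$) together with the initial optimality gives the claim. The only genuine content is the co-coercivity step; the coupling argument itself is routine once one notices that adding identical Gaussian noise does not disturb the contraction of any $1$-Lipschitz deterministic map, so I would expect the write-up to be short, with the bulk of space devoted to stating (or deferring to \cite{Nesterov04} for) the quadratic contraction of $T_\epsilon$.
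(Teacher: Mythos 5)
Your proposal is correct and follows essentially the same route as the paper's proof in Appendix~\ref{App:ULASynch}: synchronous coupling with shared Gaussian noise, the one-step contraction of the gradient-descent map via the co-coercivity inequality~\eqref{Eq:Nest} (Theorem~2.1.12 of \cite{Nesterov04}, rather than 2.1.15), and iteration. The only cosmetic difference is that you couple optimally once at time $0$ and propagate a single coupling, whereas the paper re-couples optimally at each step and argues one step at a time; both are valid and give the same bound.
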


The above implies that ULA has a unique stationary measure $\nu_\epsilon$.
The bias should be of order $1$
since ULA is a first-order discretization method, as suggested by Example~\ref{Ex:OU2}.
Using a simple synchronous coupling argument with the same smoothness assumption as in Lemma~\ref{Lem:ULASynch} only yields a bias of order $\frac{1}{2}$~\citep[Theorem~1]{D17a}.
But with an additional smoothness assumption, an expansion within the synchronous coupling argument yields a bias of order $1$~\citep[Theorem~4]{DK17}. 
It is interesting to ask whether it is possible to prove a bias of order $1$ with the minimal smoothness assumption as in Lemma~\ref{Lem:ULASynch}.

\begin{lemma}\label{Lem:ULABias}
Suppose $\nu$ is $\alpha$-strongly log-concave and $L$-log-smooth, and $-\nabla^2 \log \nu$ is $M$-Lipschitz.
For $0 < \epsilon < \frac{2}{\alpha+L}$, the bias of ULA is $W_2(\nu_\epsilon,\nu) \le \frac{\epsilon}{\alpha}(\frac{1}{2} Mn + \frac{11}{5} \sqrt{L^3 n})$.
\end{lemma}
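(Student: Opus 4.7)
The strategy, following~\cite{DK17}, couples ULA with the continuous Langevin flow over many steps and extracts the asymptotic bias from a fixed-point inequality in which the stochastic-integral noise enters only quadratically, thanks to a martingale orthogonality. Let $(x_k)$ be ULA~\eqref{Eq:ULA} started at $X_0 \sim \nu$, and let $(Y_t)$ solve~\eqref{Eq:LD} with $Y_0 = X_0$ on a common Brownian motion $W$ with $\sqrt{2\epsilon}\,z_k = \sqrt{2}(W_{(k+1)\epsilon} - W_{k\epsilon})$. Since $\nu$ is stationary for~\eqref{Eq:LD}, $Y_{k\epsilon} \sim \nu$ for every $k$. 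Writing $\Delta_k := x_k - Y_{k\epsilon}$, one has $\Delta_{k+1} = U_k + E_k$ with
\begin{equation*}
U_k := \Delta_k - \epsilon[\nabla f(x_k) - \nabla f(Y_{k\epsilon})], \qquad E_k := \int_{k\epsilon}^{(k+1)\epsilon}[\nabla f(Y_s) - \nabla f(Y_{k\epsilon})]\, ds,
\end{equation*}
and the argument of Lemma~\ref{Lem:ULASynch} gives $\|U_k\|^2 \le (1-c\epsilon)\|\Delta_k\|^2$ with $c = 2\alpha L/(\alpha+L)$. Since $W_2(\nu_\epsilon,\nu)^2 \le \liminf_k \E\|\Delta_k\|^2$, the bias bound reduces to analyzing this recursion as $k \to \infty$.

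Decompose $E_k = D_k + S_k$ into its $\mathcal{F}_{k\epsilon}$-conditional mean and a martingale increment. The conditional mean is controlled by a second-order Taylor expansion $\nabla f(Y_s) - \nabla f(Y_{k\epsilon}) = \nabla^2 f(Y_{k\epsilon})(Y_s - Y_{k\epsilon}) + R_s$ with $\|R_s\| \le \tfrac{M}{2}\|Y_s - Y_{k\epsilon}\|^2$: the remainder contributes the $Mn$ piece via $\E\|Y_s - Y_{k\epsilon}\|^2 = O(sn)$ at stationarity, while applying It\^o's formula to $\nabla f$ and then stochastic Fubini to the linear part produces a drift-type term whose $L^2$ norm is controlled by $\E_\nu\|\nabla^2 f\,\nabla f\|^2 \le L^2\,\E_\nu\|\nabla f\|^2 \le L^3 n$, using the integration-by-parts identity $\E_\nu\|\nabla f\|^2 = \E_\nu[\Delta f] \le Ln$; this gives the $\sqrt{L^3 n}$ piece. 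Together, $\sqrt{\E\|D_k\|^2} \lesssim \epsilon^2(Mn + \sqrt{L^3 n})$. The martingale part, essentially $\sqrt{2}\int((k+1)\epsilon - r)\nabla^2 f(Y_r)\,dW_r$, is bounded by It\^o's isometry using $\|\nabla^2 f\|_{\mathrm{op}} \le L$, yielding $\sqrt{\E\|S_k\|^2} = O(L\sqrt{n}\,\epsilon^{3/2})$.

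The heart of the argument is the orthogonality $\E[U_k \cdot S_k] = 0$, which holds because $U_k$ is $\mathcal{F}_{k\epsilon}$-measurable while $\E[S_k \mid \mathcal{F}_{k\epsilon}] = 0$. This produces the recursion
\begin{equation*}
\E\|\Delta_{k+1}\|^2 \,\le\, (1-c\epsilon)\,\E\|\Delta_k\|^2 + 2\sqrt{1-c\epsilon}\,\sqrt{\E\|\Delta_k\|^2}\,\sqrt{\E\|D_k\|^2} + \E\|E_k\|^2,
\end{equation*}
in which $S_k$ enters only through the (small) quadratic term $\E\|E_k\|^2$. The stationary solution of the resulting quadratic inequality, combined with $1 - \sqrt{1-c\epsilon} \ge c\epsilon/2 \ge \alpha\epsilon/2$, yields $W_2(\nu_\epsilon,\nu) \le \frac{\epsilon}{\alpha}(\tfrac{1}{2}Mn + \tfrac{11}{5}\sqrt{L^3 n})$ after constant-tracking (the $S_k$ contribution costs only an extra factor of $\sqrt{\alpha/L} \le 1$ and is absorbed). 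The main obstacle is precisely this orthogonality step: absent the martingale cancellation, the $O(L\sqrt{n}\,\epsilon^{3/2})$ noise enters linearly and degrades the final bound to order $\sqrt{\epsilon}$, matching~\cite[Theorem~1]{D17a}; the Hessian-Lipschitz hypothesis is exactly what enables the second-order Taylor expansion that reduces $D_k$ to order $\epsilon^2$.
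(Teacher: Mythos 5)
Your proposal is correct and is essentially the paper's own approach: the paper proves Lemma~\ref{Lem:ULABias} simply by invoking the noiseless case ($\delta=\sigma=0$) of \citet[Theorem~4]{DK17}, and your argument—synchronous coupling with the stationary diffusion, the contraction estimate from Lemma~\ref{Lem:ULASynch}, the split of the discretization error into a conditional mean (second-order Taylor via the $M$-Lipschitz Hessian, with $\E_\nu\|\nabla f\|^2\le Ln$) and a conditionally centered stochastic integral entering only quadratically by orthogonality—is precisely the expansion-within-synchronous-coupling proof underlying that cited theorem. The only part left implicit is the constant-tracking that produces $\frac12 Mn$ and $\frac{11}{5}\sqrt{L^3 n}$, which is exactly what the cited result supplies.
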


The bias from Lemma~\ref{Lem:ULABias} and the exponential contraction from Lemma~\ref{Lem:ULASynch} imply an $O(\frac{1}{\delta} \log \frac{1}{\delta})$ iteration complexity bound for sampling from $\nu$ up to Wasserstein error $O(\delta)$, 
by choosing $\epsilon = O(\delta)$ and running ULA for $k = \Omega(\frac{1}{\epsilon} \log \frac{1}{\delta})$ iterations to get $W_2(\rho_k,\nu) \le W_2(\rho_k,\nu_\epsilon) + W_2(\nu_\epsilon,\nu) = O(\epsilon)$.
On the other hand, for the ideal unbiased discretization of the Langevin dynamics, exponential contraction would imply a logarithmic iteration complexity bound.

\subsubsection{ULA as the Forward-Flow discretization of Langevin dynamics}

ULA is biased because it is the Forward-Flow (FFl) discretization of the Langevin dynamics~\eqref{Eq:LD}.
Concretely, we can write ULA~\eqref{Eq:ULA} as a composition of two operations:
\begin{subequations}\label{Eq:ULA2}
\begin{align}
x_{k+\frac{1}{2}} &= x_k - \epsilon \nabla f(x_k) \label{Eq:ULA2a} \\
x_{k+1} &= x_{k+\frac{1}{2}} + \sqrt{2\epsilon} z_k.  \label{Eq:ULA2b}
\end{align}
\end{subequations}
The first step~\eqref{Eq:ULA2a} is a gradient descent step or the forward method for $f$;
the second step~\eqref{Eq:ULA2b} is the exact solution for the heat flow.
In the space of measures, the iterations in~\eqref{Eq:ULA2} correspond to
\begin{subequations}\label{Eq:ULA3}
\begin{align}
\rho_{k+\frac{1}{2}} &= (\I - \epsilon \nabla f)_\# \rho_k \label{Eq:ULA3a} \\
\rho_{k+1} &= \N(0,2\epsilon I) \ast \rho_{k+\frac{1}{2}} \label{Eq:ULA3b} 
\end{align}
\end{subequations}
where $\phantom{}_\#$ is the pushforward operator and $\ast$ is the convolution.
When $f$ is smooth ($\nabla^2 f \preceq \frac{1}{\epsilon} I$), 
the first step~\eqref{Eq:ULA3a} is the gradient descent for the expected function value $\E_\rho[f]$ (see~$\S\ref{Sec:GDF}$).
The second step~\eqref{Eq:ULA3b} is the exact gradient flow for negative entropy $-H(\rho)$ (see~$\S\ref{Sec:Heat}$).
These are the two components of the decomposition~\eqref{Eq:RelEnt2} of relative entropy. 
Therefore, ULA---which in the space of measures takes the form~\eqref{Eq:ULA3}---is the Forward-Flow method applied to the composite optimization problem of minimizing relative entropy~\eqref{Eq:RelEnt2}.
The source of the bias is that the flow method is not the adjoint of the forward method,
so the Foward-Flow method does not conserve the stationary point of the overall flow, which is the target measure $\nu$.
See~$\S\ref{App:Comp}$ for a review on composite optimization.

In a recent work, \cite{B18} proposes the proximal version of ULA, obtained by replacing the forward method (gradient descent) in~\eqref{Eq:ULA3a} with the backward method (proximal gradient) for $\E_\rho[f]$, which is also implemented by the proximal gradient for $f$ in space.
This is the Backward-Flow discretization of the Langevin dynamics,
which has similar convergence guarantees as ULA,
but is also still biased.

\subsection{The Forward-Backward method for Langevin dynamics}

A general algorithm for a composite optimization problem is the Forward-Backward (FB) method,\footnote{Or its adjoint, the Backward-Forward (BF) method.
However, the forward method for the heat flow is also not implementable, except for Gaussian data. 
We can also run the forward method (gradient descent) or backward method (proximal gradient) for OU with Gaussian data, and see they are consistent; 
see Examples~\ref{Ex:OUF} and~\ref{Ex:OUB}.}
which means running the forward method (gradient descent) for one component and running the backward method (proximal gradient) for the other.
The FB algorithm is consistent (unbiased) because the backward method is the adjoint of the forward method, so the minimizer is conserved; see $\S\ref{App:WhyFB}$.
Furthermore, FB converges exponentially fast under the gradient-domination condition and some smoothness assumptions~\cite[]{GRV17}, which is formally applicable in our case.
Therefore, we wish to run the FB method for composite optimization.

However, for our problem of optimizing relative entropy, FB means we need to replace~\eqref{Eq:ULA3b} with the backward method for negative entropy in the space of measures.
This is well-defined, but cannot be solved explicitly---unlike the exact flow~\eqref{Eq:ULA3b}---except for Gaussian initial data~\cite[]{CG03}.
In general this exception does not help, since Gaussianity is not preserved under the forward step~\eqref{Eq:ULA3a} when $\nabla f$ is nonlinear.
However, for the OU process, $\nabla f$ is linear, so a Gaussian initial data stays Gaussian under the FB algorithm, and in this case we see FB is indeed consistent; see Example~\ref{Ex:OUFB} in~$\S\ref{Sec:FBLang}$.

\subsection{The symmetrized Langevin algorithm}

Now that we know the unbiased FB algorithm for the Langevin dynamics is not implementable in general, we can try to reduce the bias of what we can implement.

Recall that the bias of an optimization algorithm is of the same size as its discretization order
(see $\S\ref{App:Disc}$ for a review of discretization methods).
Recall also that if an algorithm is symmetric, which means it is equal to its adjoint, then its order is even; in particular, it must be of order at least $2$.
In general we can symmetrize an algorithm by composing it with its adjoint. 
Therefore, given any algorithm with a first-order bias, we can upgrade it to an algorithm with a second-order bias by symmetrizing it.

Applying this idea to FFl as the base algorithm, we obtain the {\em symmetrized Forward-Flow} (SFFl) algorithm, which is the composition of FFl and its adjoint, the Flow-Backward (FlB) algorithm.

Applying SFFl to the Langevin dynamics yields the {\em symmetrized Langevin algorithm} (SLA):
\begin{align}\label{Eq:SLA}
x_{k+1} = (I + \epsilon \nabla f)^{-1}(x_k-\epsilon \nabla f(x_k) + \sqrt{4\epsilon} z_k)
\end{align}
where $z_k \sim \N(0,I)$ is independent of $x_k$.
Here $(I+\epsilon \nabla f)^{-1}$ is the proximal gradient operator of $f$, i.e., $y = (I+\epsilon \nabla f)^{-1}(x)$ if and only if $y + \epsilon \nabla f(y) = x$, or $y = \arg\min_{y' \in \R^n} \{ f(y') + \frac{1}{2\epsilon} \|y'-x\|^2\}$.
This is not analytically solvable for general $f$, and may require numerical computation in each iteration.
However, if we can do this, then we can run SLA which in principle has a smaller bias.
When the target measure is Gaussian, namely for the OU process, we can write the SLA iteration explicitly.
In this case we see that SLA is in fact consistent,\footnote{
This is surprising because SFFl is biased even for minimizing a sum of two quadratic functions in space. 
Here the consistency of SLA for OU relies on the property that variance adds linearly when we sum independent Gaussians.
}
and converges exponentially fast.

\begin{example}[SLA for OU.]\label{Ex:OUSLA}
Let $\nu = \N(\mu,\Sigma)$.
The SLA iteration is 
$$x_{k+1}-\mu = (I+\epsilon \Sigma^{-1})^{-1}(I-\epsilon \Sigma^{-1})(x_k-\mu) + \sqrt{4\epsilon} (I+\epsilon \Sigma^{-1})^{-1} z_k.$$
Unfolding and using the fact that the sum of independent Gaussians is Gaussian, we can write
$$x_k - \mu \stackrel{d}{=} A_\epsilon^k(x_0-\mu) + \sqrt{4\epsilon} B_\epsilon (I-A_\epsilon^2)^{-\frac{1}{2}} (I-A_\epsilon^{2k})^{\frac{1}{2}} z$$
where $A_\epsilon = (I+\epsilon \Sigma^{-1})^{-1}(I-\epsilon \Sigma^{-1})$, $B_\epsilon = (I+\epsilon \Sigma^{-1})^{-1}$, and $z \sim \N(0,I)$ is independent of $x_0$.
For all $\epsilon > 0$, $\lim_{k \to \infty} A_\epsilon^k = 0$.
Therefore, $x_k \stackrel{d}{\to} \mu + \sqrt{4\epsilon} B_\epsilon (I-A_\epsilon^2)^{-\frac{1}{2}} z \sim \N(\mu,\Sigma)$.
This shows SLA converges to the correct target measure $\nu = \N(\mu,\Sigma)$.
\end{example}

We also note that there are other discretizations of the Langevin dynamics that are unbiased for the Gaussian target measure, for example the Ozaki discretization which uses Hessian information; see for example~\citep{D17}.
We can also implement SLA for a mixture of two Gaussians, which requires solving a one-dimensional numerical problem in each iteration;
see~$\S\ref{App:MG}$.

\subsubsection{Convergence to the biased limit}

Similar to ULA, we have the following contraction result for SLA under strong log-concavity; see~Appendix~\ref{App:SLASynch}.

\begin{lemma}\label{Lem:SLASynch}
Suppose $\nu$ is $\alpha$-strongly log-concave and $L$-log-smooth ($\alpha I \preceq -\nabla^2 \log \nu \preceq LI$) for some $0 < \alpha \le L$.
Let $\rho_k$, $\rho_k'$ be any two distributions evolving following the SLA algorithm~\eqref{Eq:SLA}.
Then for $0 < \epsilon \le \frac{2}{\alpha+L}$,
$$W_2(\rho_k,\rho_k')^2 \le \Bigg(\frac{1-2\epsilon\frac{\alpha L}{\alpha+L}}{1+2\epsilon\frac{ \alpha L}{\alpha+L}}\Bigg)^k \, W_2(\rho_0,\rho_0')^2.$$
\end{lemma}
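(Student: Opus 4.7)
The plan is to run a synchronous coupling argument in exact parallel to the ULA proof (Lemma~\ref{Lem:ULASynch}), but now decomposing the SLA update into its three pieces and tracking contraction/expansion at each one. Concretely, given an optimal $W_2$ coupling of $\rho_0,\rho_0'$, I will drive both chains with a common Gaussian $z_k\sim\N(0,I)$ at each step, and split
\begin{align*}
y_k \;&=\; x_k - \epsilon \nabla f(x_k), \\
w_k \;&=\; y_k + \sqrt{4\epsilon}\, z_k, \\
x_{k+1} \;&=\; (I+\epsilon\nabla f)^{-1}(w_k),
\end{align*}
with analogous quantities $y_k',w_k',x_{k+1}'$ for the primed chain. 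The middle (noise) step is trivial under synchronous coupling, since $w_k-w_k' = y_k-y_k'$. So everything reduces to controlling the forward map $I-\epsilon\nabla f$ and the backward (proximal) map $(I+\epsilon\nabla f)^{-1}$ pointwise, and then taking expectation and infimum over couplings.

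For the forward step I would apply the standard Nesterov/co-coercivity estimate, which under $\alpha I\preceq \nabla^2 f \preceq L I$ and $0<\epsilon\le 2/(\alpha+L)$ gives
\[
\|y_k-y_k'\|^2 \;\le\; \Bigl(1-2\epsilon\tfrac{\alpha L}{\alpha+L}\Bigr)\|x_k-x_k'\|^2.
\]
This is exactly the ULA factor from Lemma~\ref{Lem:ULASynch} and is obtained by expanding $\|x-y-\epsilon(\nabla f(x)-\nabla f(y))\|^2$ and substituting the co-coercivity inequality $\langle \nabla f(x)-\nabla f(y), x-y\rangle \ge \frac{\alpha L}{\alpha+L}\|x-y\|^2 + \frac{1}{\alpha+L}\|\nabla f(x)-\nabla f(y)\|^2$; the $\|\nabla f(x)-\nabla f(y)\|^2$ remainder has sign $\epsilon(\epsilon-2/(\alpha+L))\le 0$ in our range of $\epsilon$.

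For the backward step, the key observation is that the \emph{same} co-coercivity inequality, applied now to the resolvent, yields a strict \emph{contraction} by $1/(1+2\epsilon\alpha L/(\alpha+L))$. I would write $w_k-w_k' = (x_{k+1}-x_{k+1}') + \epsilon(\nabla f(x_{k+1})-\nabla f(x_{k+1}'))$, expand $\|w_k-w_k'\|^2$, and apply co-coercivity at the points $x_{k+1},x_{k+1}'$. Now the cross term has the opposite sign, so one obtains
\[
\|w_k-w_k'\|^2 \;\ge\; \Bigl(1+2\epsilon\tfrac{\alpha L}{\alpha+L}\Bigr)\|x_{k+1}-x_{k+1}'\|^2 + \epsilon\Bigl(\tfrac{2}{\alpha+L}+\epsilon\Bigr)\|\nabla f(x_{k+1})-\nabla f(x_{k+1}')\|^2,
\]
and the last term is nonnegative for every $\epsilon\ge 0$ (with no upper bound on $\epsilon$ needed for this half of the argument). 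Dividing gives the desired bound on $\|x_{k+1}-x_{k+1}'\|^2$ in terms of $\|w_k-w_k'\|^2$.

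Chaining the forward contraction, the equality at the noise step, and the backward contraction produces the pointwise bound
\[
\|x_{k+1}-x_{k+1}'\|^2 \;\le\; \frac{1-2\epsilon\frac{\alpha L}{\alpha+L}}{1+2\epsilon\frac{\alpha L}{\alpha+L}}\,\|x_k-x_k'\|^2,
\]
which is already deterministic (independent of $z_k$), so taking expectation and then the infimum over initial couplings yields the per-step $W_2$ contraction; iterating $k$ times gives the stated inequality. I expect the only mildly delicate point to be organizing the co-coercivity inequality so that its two uses line up symmetrically between the forward and backward steps; once this algebraic bookkeeping is in place, the rest is formally identical to the ULA proof in Appendix~\ref{App:ULASynch}, with the single improvement that the resolvent contributes an extra factor $1/(1+2\epsilon\alpha L/(\alpha+L))$.
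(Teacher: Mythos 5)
Your proposal is correct and follows essentially the same route as the paper's proof in Appendix~\ref{App:SLASynch}: a synchronous coupling together with the co-coercivity inequality~\eqref{Eq:Nest} applied at $x_k,x_k'$ for the forward half and at $x_{k+1},x_{k+1}'$ for the proximal half, the only cosmetic difference being that you bound the three sub-steps separately while the paper expands both sides of the single implicit identity $\|x_{k+1}-x_{k+1}'+\epsilon(\nabla f(x_{k+1})-\nabla f(x_{k+1}'))\|^2=\|x_k-x_k'-\epsilon(\nabla f(x_k)-\nabla f(x_k'))\|^2$.
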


The above implies that SLA has a unique stationary measure $\tilde \nu_\epsilon$.
Despite Example~\ref{Ex:OUSLA}, in general SLA is biased, and the bias should be of order 2 since SLA is a second-order discretization method.
However, using the expansion within synchronous coupling as in ULA seems to still yield a bias of order 1 due to the stochastic terms. 

\begin{question}\label{Q:SLA}
Is it true that if $\nu$ is strongly log-concave, then $W_2(\nu, \tilde \nu_\epsilon) = O(\epsilon^2)$?
\end{question}

If Question~\ref{Q:SLA} is true, then combined with Lemma~\ref{Lem:SLASynch}, it implies an $O(\frac{1}{\sqrt{\delta}} \log \frac{1}{\delta})$ iteration complexity bound for sampling from $\nu$ up to Wasserstein error $O(\delta)$, by choosing $\epsilon = O(\sqrt{\delta})$ and running SLA for $k = O(\frac{1}{\epsilon} \log \frac{1}{\delta})$ iterations to get $W_2(\rho_k,\nu) \le W_2(\rho_k,\tilde\nu_\epsilon) + W_2(\tilde \nu_\epsilon,\nu) = O(\epsilon^2) = O(\delta)$.

We also note that a bias of order 2 may be the best we can hope for, since in general any higher order necessitates running the algorithm with negative step size~\cite[Theorem~3.18]{HLW06}.
For the Langevin dynamics, this means running the heat flow backward in time, which is not only non-implementable, but also not well-posed mathematically (except in special cases, e.g., see~\cite{M61}).
But as entropy is a very special functional, it is possible this apparent difficulty may be circumvented and implementable algorithms with higher-order bias may be found.

\section{Optimization in the space of measures}
\label{Sec:OptMeas}

Let us discuss further how to optimize in the space of measures.
We review optimization in a smooth Riemannian manifold in $\S\ref{App:OptM}$, including gradient flow and the forward and backward methods, and conditions for exponential convergence such as gradient domination and strong convexity.
We also briefly recall the Wasserstein metric; see~$\S\ref{App:Wass}$ for a review or~\cite{Vil08} for more detail.

Let $\P \equiv \P_2(\R^n)$ denote the space of probability measures on $\R^n$ with finite second moments,
endowed with the Wasserstein metric $W_2$ induced by the quadratic distance.
Every element $\rho \in \P$ is a probability measure represented by its density function $\rho \colon \R^n \to \R$ with respect to the Lebesgue measure $dx$.
The tangent space $\T_\rho \P$ consists of functions $R \colon \R^n \to \R$ of the form $R = -\nabla \cdot (\rho \nabla \phi)$ for some $\phi \colon \R^n \to \R$;
we write $R \equiv \nabla \phi$.
The norm of $R \equiv \nabla \phi$ is $\|R\|_\rho = (\E_\rho[\|\nabla \phi\|^2])^{\frac{1}{2}}$.

Suppose we wish to solve the optimization problem
$$\min_{\rho \in \P} F(\rho)$$
where $F \colon \P \to \R$ is a smooth functional.
There are two basic classes of interesting functionals:

\begin{enumerate}
  \item {\bf Expected value:} 
  Suppose $F(\rho) = \E_\rho[f]$ is the expected value of a smooth function $f \colon \R^n \to \R$.
  This is an example of a ``potential energy''~\cite[$\S5.2.2$]{Vil03}.
  It involves only a scalar product of a density $\rho$ and a function $f$,
  so can be implemented by samples in space.
  The gradient flow, gradient descent, and proximal gradient methods for $\E_\rho[f]$ are implemented by the corresponding gradient flow, gradient descent, and proximal gradient methods for $f$, with the same convergence guarantees.
  Therefore, we can view a deterministic problem from the space of measures without loss of information.
  See $\S\ref{App:ExpVal}$ for more detail.
  
  \item {\bf Negative entropy:} 
  Suppose $F(\rho) = -H(\rho)$ is the negative entropy.
  This is an example of an ``internal energy'' where we apply a function to the density $\rho$ before integrating,
  so it apparently cannot be implemented from samples in space.
  Interestingly, the gradient flow of negative entropy is the heat flow, which is implementable by the Brownian motion (Gaussian noise) in space.
  This also gives an optimization interpretation of the Fisher information as the squared gradient of entropy.  
  See $\S\ref{App:Ent}$ for more detail.
\end{enumerate}

There is a third class of functionals
which is ``interaction energy'', for example variance.
We do not use interaction energy for sampling in this paper, but see~$\S\ref{App:Var}$ for the gradient flow of variance.

\subsection{Minimizing relative entropy}
\label{Sec:RelEnt}

Our objective function for sampling is the {\em relative entropy}, which is a combination of the potential and internal energies:
\begin{align}\label{Eq:FRelEnt}
F(\rho) = H_\nu(\rho) = \int \rho \log \frac{\rho}{\nu} = \E_\rho[f] - H(\rho)
\end{align}
where $\nu = e^{-f}$ is the target measure.
Relative entropy is nonnegative, $H_\nu(\rho) \ge 0$,
and it is minimized at the target measure: $H_\nu(\rho) = 0$ if and only if $\rho = \nu$.

\subsubsection{Log-Sobolev inequality as gradient domination of relative entropy}
\label{Sec:LSI}

The squared gradient of relative entropy $F(\rho) = H_\nu(\rho)$ is the {\em relative Fisher information}:
$$J_\nu(\rho) = \E_\rho\left[\left\|\nabla \log \frac{\rho}{\nu} \right\|^2\right].$$
Note that $\rho = \nu$ is the only stationary point of $H_\nu$;
because if $\rho$ is a stationary point of $H_\nu$, then $\|\grad_\rho H_\nu\|^2_\rho = J_\nu(\rho) = 0$, and it is clear that $J_\nu(\rho) = 0$ if and only if $\rho = \nu$.

The gradient domination condition $\|\grad F\|^2 \ge 2\alpha (F - \min F)$, $\alpha > 0$, for relative entropy $F = H_\nu$ becomes the {\em logarithmic Sobolev inequality} (LSI)~\cite[]{G75,OV00}:
$$J_\nu(\rho) \ge 2\alpha H_\nu(\rho) ~~~~ \forall \, \rho \in \P.$$
The gradient flow identity $\frac{d}{dt} F(\rho) = -\|\grad_\rho F\|^2_\rho$ becomes $\frac{d}{dt} H_\nu(\rho) = -J_\nu(\rho)$, which is a generalization of the De Bruijn's identity.

The set of measures satisfying LSI includes all strongly log-concave measures,
and it is closed under bounded perturbation with a constant that decays exponentially with the size of the perturbation~\cite[]{HS87}. 
Thus, even a multimodal distribution such as a mixture of Gaussian satisfies LSI, and hence the Langevin dynamics converges exponentially fast. 

The Hessian of relative entropy $F(\rho) = -H_\nu(\rho)$ is, for a tangent function $R \equiv \nabla \phi \in \T_\rho \P$,
$$(\Hess_\rho H_\nu)(R,R) = \E_\rho \big[\|\nabla^2 \phi\|^2_{\HS} + \langle \nabla \phi, (\nabla^2 f) \nabla \phi \rangle \big].$$
Therefore, if $f$ is strongly convex ($\nu$ is strongly log-concave), then $F = H_\nu$ is also strongly convex, in which case any two co-evolving solutions are contracting exponentially fast.
However, note that the Hessian of $H_\nu$ is not bounded above.

\subsubsection{Langevin dynamics as gradient flow of relative entropy}

The gradient of $F(\rho) = H_\nu(\rho)$ is $\grad_\rho F = -\nabla \cdot (\rho \nabla \log \frac{\rho}{\nu}) \equiv \nabla \log \frac{\rho}{\nu}$.
Therefore, the gradient flow equation $\dot \rho = -\grad_\rho F$ of relative entropy is the Fokker-Planck equation~\eqref{Eq:FP}:
\begin{align}\label{Eq:FP2}
\part{\rho}{t} \,=\, \nabla \cdot \left(\rho \nabla \log \frac{\rho}{\nu}\right) \,=\, \nabla \cdot (\rho \nabla f) + \Delta \rho.\end{align}
This is implementable in space as the Langevin dynamics stochastic differential equation~\eqref{Eq:LD}:
$$dX = -\nabla f(X) \, dt + \sqrt{2} \, dW.$$
When the target measure is Gaussian, namely for the Ornstein-Uhlenbeck (OU) process, we have an exact solution, as we have seen in Example~\ref{Ex:OU}. 
However, in general we need to discretize.

If $\nu$ satisfies $\alpha$-LSI, then along the gradient flow~\eqref{Eq:FP2}, 
$$\frac{d}{dt} H_\nu(\rho) = -J_\nu(\rho) \le -2\alpha H_\nu(\rho),$$
which implies exponential convergence in relative entropy: 
$$H_\nu(\rho_t) \le e^{-2\alpha t} H_\nu(\rho_0).$$
This also implies exponential convergence of the distance $W_2(\rho_t,\nu)$, since LSI implies the 
{\em Talagrand inequality}:
$W_2(\rho,\nu)^2 \le \frac{2}{\alpha} H_\nu(\rho), \forall\,\rho \in \P$~\cite[]{T96,OV00}.
This is because in general gradient domination implies the sufficient growth property, see Appendix~\ref{Sec:CondExp}.

\subsubsection{Forward method for Langevin dynamics}
\label{Sec:FLang}

The forward method $\rho_{k+1} = \exp_{\rho_k}(-\epsilon \grad_{\rho_k} F)$ for relative entropy $F = H_\nu$ is
$$\rho_{k+1} = \exp_{\rho_k}\left(-\epsilon \nabla \log \frac{\rho_k}{\nu}\right)$$
where recall $\grad_\rho F \equiv \nabla \log \frac{\rho}{\nu}$.
If $\rho_k$ is $K$-log-semiconcave relative to $\nu$, which means $-\nabla^2 \log \frac{\rho_k}{\nu} \succeq KI$ for some $K \in \R$, then for $\epsilon \le \frac{1}{\max\{0,-K\}}$, the exponential map above is given by:
\begin{align}\label{Eq:FwLang}
\rho_{k+1} = \Big(I - \epsilon \nabla \log \frac{\rho_k}{\nu}\Big)_\# \rho_k.
\end{align}
If we know the analytic form of $\rho_k$, then we can implement one step of the algorithm
by $x_{k+1} = x_k - \epsilon \nabla f(x_k) - \epsilon \nabla \log \rho_k(x_k)$.
However, we cannot iterate this algorithm because at the next round we do not know what $\rho_{k+1}$ is, we only have $x_{k+1}$.

For Gaussian target measure, we can solve the forward method with Gaussian initial data.

\begin{example}[Forward method for OU with Gaussian data.]
\label{Ex:OUF}
Let $\nu = \N(\mu,\Sigma)$  as in Example~\ref{Ex:OU}.
Let $\rho_0 = \N(\mu,\Sigma_0)$ with $\Sigma_0^{-1} \succeq \Sigma^{-1}$. 
Along the forward method~\eqref{Eq:FwLang} for OU, $\rho_k = \N(\mu,\Sigma_k)$ stays Gaussian. 
Further,~\eqref{Eq:FwLang} becomes $x_{k+1}-\mu = (I+\epsilon(\Sigma_k^{-1}-\Sigma^{-1}))(x_k-\mu)$.
Therefore, $\Sigma_{k+1} = \Sigma_k(I+\epsilon(\Sigma_k^{-1}-\Sigma^{-1}))^2$.
The only fixed point is $\Sigma_k = \Sigma_{k+1} = \Sigma$. 
Thus, the forward method is consistent for OU with Gaussian data.
\end{example}

\subsubsection{Backward method for Langevin dynamics}
\label{Sec:BLang}

The backward method $\rho_{k+1} = \arg\min_{\rho \in \P} \{F(\rho) + \frac{1}{2\epsilon} W_2(\rho,\rho_k)^2\}$ for relative entropy  $F = H_\nu$ is
$$\exp_{\rho_{k+1}}\left(\epsilon \nabla \log \frac{\rho_{k+1}}{\nu}\right) = \rho_k.$$
If $\rho_{k+1}$ is $L$-log-smooth with respect to $\nu$, which means $-\nabla^2 \log \frac{\rho_{k+1}}{\nu} \preceq LI$ for some $L > 0$, then for $\epsilon \le \frac{1}{L}$, the backward method above is implemented (implicitly) by:
\begin{align}\label{Eq:BwLang}
\left(I + \epsilon \nabla \log \frac{\rho_{k+1}}{\nu}\right)_\# \rho_{k+1} = \rho_k.
\end{align}
In general this is not solvable analytically.

For Gaussian target measure, we can solve the backward method with Gaussian initial data.

\begin{example}[Backward method for OU with Gaussian data.]
\label{Ex:OUB}
Let $\nu = \N(\mu,\Sigma)$ as in Example~\ref{Ex:OU}.
Let $\rho_0 = \N(\mu,\Sigma_0)$ for simplicity.
Along the backward method~\eqref{Eq:BwLang} for OU, $\rho_k = \N(\mu,\Sigma_k)$ stays Gaussian.
Further,~\eqref{Eq:BwLang} becomes
$(I - \epsilon (\Sigma_{k+1}^{-1}-\Sigma^{-1})) (x_{k+1}-\mu) = x_k-\mu$.
Therefore, $\Sigma_{k+1}(I-\epsilon(\Sigma_{k+1}^{-1}-\Sigma^{-1}))^2 = \Sigma_k$.
The only fixed point is $\Sigma_k = \Sigma_{k+1} = \Sigma$.
Thus, the backward method is consistent for OU with Gaussian data.
\end{example}

\section{Langevin dynamics as composite optimization in the space of measures}
\label{Sec:Comp}

In~$\S\ref{Sec:RelEnt}$ we have seen how to optimize $H_\nu(\rho)$ by considering it as a single function.
We now study how to optimize $H_\nu(\rho)$ as a composite function when we write it as a sum of two functions:
$$H_\nu(\rho) = \E_\rho[f] - H(\rho).$$

\subsection{Forward-Backward for Langevin dynamics}
\label{Sec:FBLang}

In general, the algorithm of choice for composite optimization is the Forward-Backward (FB) algorithm, which means we run the forward method for one component and the backward method for the other.
(Equivalently, we can run the Backward-Forward algorithm, which is the adjoint version.)
The FB algorithm is consistent because the backward method is adjoint to the forward method, so the FB algorithm preserves the stationary point; see $\S\ref{App:Comp}$ for a review.
FB can be shown to converge exponentially fast under gradient domination condition and some smoothness assumptions~\cite[]{GRV17}; see also~$\S\ref{App:FBRn}$.
In principle, the FB algorithm for the Langevin dynamics is the answer we are seeking.

For optimizing relative entropy, the FB algorithm means running the forward method for $\E_\rho[f]$ (which is implemented by the gradient descent for $f$), followed by the backward method for $-H(\rho)$:
\vspace{-10pt}
\begin{subequations}\label{Eq:FBLang}
\begin{align}
\rho_{k+\frac{1}{2}} &= (I - \epsilon \nabla f)_\# \rho_k \label{Eq:FBLanga} \\
\rho_{k+1} &= \arg\min_{\rho \in \P} \Big\{ -H(\rho) + \frac{1}{2\epsilon} W_2(\rho,\rho_{k+\frac{1}{2}})^2\Big\}. \label{Eq:FBLangb}
\end{align}
\end{subequations}

We cannot implement the backward method for the heat flow in~\eqref{Eq:FBLangb}, so the FB algorithm above is not implementable in general.
For Gaussian target measure, we can solve FB for Gaussian initial data.
In this case the distributions stay Gaussian, and FB is indeed consistent;
see Example~\ref{Ex:OUFB} in $\S\ref{App:FBLang}$.

\subsection{Backward-Forward for Langevin dynamics}
\label{App:BFLang}

Similarly, we can run the Backward-Forward (BF) algorithm for the Langevin dynamics.

For optimizing relative entropy, BF means running the backward method for $\E_\rho[f]$ (which is implemented by the proximal gradient step for $f$), followed by the forward method for $-H(\rho)$:
\begin{subequations}\label{Eq:BFLang}
\begin{align}
\rho_{k+\frac{1}{2}} &= \left((I + \epsilon \nabla f)^{-1}\right)_\# \rho_k \label{Eq:BFLanga} \\
\rho_{k+1} &= \exp_{\rho_{k+\frac{1}{2}}}(-\epsilon \nabla \log \rho_{k+\frac{1}{2}}). \label{Eq:BFLangb}
\end{align}
\end{subequations}

The BF algorithm above is not implementable in general, since we cannot implement the forward method for the heat flow beyond one step.
For Gaussian target measure, we can solve BF for Gaussian initial data.
In this case the distributions stay Gaussian, and BF is indeed consistent;
see Example~\ref{Ex:OUBF} in $\S\ref{App:BFLang}$.

\section{Discussion and future work}
\label{Sec:Disc}

In this paper we have studied sampling as optimization in the space of measures. We started with the question of whether we can have a consistent discretization of the Langevin dynamics that converges exponentially fast under LSI. We have seen that the difficulties are twofold: First, relative entropy is a composite optimization problem in the space of measures, so we have to work with composite algorithms. Second, the heat flow is exactly solvable, but neither its forward nor backward methods are implementable. Therefore, unbiased algorithms such as the FB algorithm are not implementable for the Langevin dynamics. The basic discretization known as ULA is implementable but biased. We also proposed a symmetrized variant of ULA which should have a smaller bias, at the price of implementing the proximal gradient step in space.

We have focused on the Langevin dynamics, which is the gradient flow dynamics for minimizing relative entropy.
More generally, we can apply more sophisticated optimization techniques, such as acceleration, to sampling. 
There is a second-order variant of the Langevin dynamics known as the underdamped Langevin dynamics, which is the stochastic version of the second-order heavy ball dynamics for optimization, and has been shown to have better convergence properties than the Langevin dynamics~\citep{CCBJ17}.
However, it is interesting to consider whether we can also apply the acceleration principle directly in the space of measures, for example via the variational Lagrangian approach~\citep{WWJ16}.

\bibliography{wibisono18_arxiv.bbl}

\newpage
\appendix

\section{Details for~$\S\ref{Sec:Samp}$}

\subsection{Proof of Lemma~\ref{Lem:ULASynch} (Contraction of ULA)}
\label{App:ULASynch}

We use synchronous coupling to show iteratively that
$$W_2(\rho_{k+1},\rho_{k+1}')^2 \le \left(1-\frac{2\epsilon \alpha L}{\alpha+L}\right) \, W_2(\rho_k,\rho_k')^2$$
which will imply the desired claim.

Let $x_k \sim \rho_k$, $x_k' \sim \rho_k'$ be coupled with the optimal coupling, so $W_2(\rho_k,\rho_k') = \E[\|x_k-x_k'\|^2]$.
We evolve $x_k$, $x_k'$ via ULA~\eqref{Eq:ULA} with the same Gaussian noise $z_k$ (this is the synchronous coupling):
\begin{align*}
x_{k+1} &= x_k - \epsilon \nabla f(x_k) + \sqrt{2\epsilon} z_k \\
x_{k+1}' &= x_k' - \epsilon \nabla f(x_k') + \sqrt{2\epsilon} z_k.
\end{align*}
Subtracting and taking the squared norm, we obtain
\begin{align*}
\|x_{k+1}-x_{k+1}'\|^2 &= \|x_k - x_k' - \epsilon (\nabla f(x_k) - \nabla f(x_k'))\|^2 \\
&= \|x_k-x_k'\|^2 - 2\epsilon \langle \nabla f(x_k) - \nabla f(x_k'), x_k-x_k' \rangle + \epsilon^2 \|\nabla f(x_k) - \nabla f(x_k')\|^2.
\end{align*}
Since $f = -\log \nu$ is $\alpha$-strongly convex and $L$-smooth, we have by~\cite[Theorem~2.1.12]{Nesterov04}:
\begin{align}\label{Eq:Nest}
\langle \nabla f(x_k) - \nabla f(x_k'), x_k-x_k' \rangle \ge \frac{\alpha L}{\alpha+L} \|x_k-x_k'\|^2 + \frac{1}{\alpha+L} \|\nabla f(x_k)-\nabla f(x_k')\|^2.
\end{align}
Therefore, we have the bound
$$\|x_{k+1}-x_{k+1}'\|^2 \le \left(1-\frac{2\epsilon \alpha L}{\alpha+L}\right) \|x_k-x_k'\|^2 + \epsilon \left(\epsilon - \frac{2}{\alpha+L}\right)\|\nabla f(x_k)-\nabla f(x_k')\|^2.$$
If $0 < \epsilon \le \frac{2}{\alpha+L}$, then the second term above is nonpositive, so we may drop it:
$$\|x_{k+1}-x_{k+1}'\|^2 \le \left(1-\frac{2\epsilon \alpha L}{\alpha+L}\right) \|x_k-x_k'\|^2.$$
Now we take expectation and use the fact that $x_k$, $x_k'$ have the optimal coupling:
$$\E[\|x_{k+1}-x_{k+1}'\|^2] \le \left(1-\frac{2\epsilon \alpha L}{\alpha+L}\right) \E[\|x_k-x_k'\|^2]
=  \left(1-\frac{2\epsilon \alpha L}{\alpha+L}\right) W_2(\rho_k,\rho_k')^2.$$
Finally, by the definition of Wasserstein distance as the infimum over all coupling, we conclude
$$W_2(\rho_{k+1},\rho_{k+1}')^2 \le  \left(1-\frac{2\epsilon \alpha L}{\alpha+L}\right) W_2(\rho_k,\rho_k')^2$$
as desired.
\hfill$\square$

\subsection{Proof of Lemma~\ref{Lem:ULABias} (Bias of ULA)}

This follows from the noiseless case ($\delta = \sigma = 0$) of~\citep[Theorem~4]{DK17}.

\subsection{Proof of Lemma~\ref{Lem:SLASynch} (Contraction of SLA)}
\label{App:SLASynch}

We follow the same outline as~$\S\ref{App:ULASynch}$.
We use synchronous coupling to show iteratively that
$$W_2(\rho_{k+1},\rho_{k+1}')^2 \le \left(\frac{1-2\epsilon\frac{\alpha L}{\alpha+L}}{1+2\epsilon\frac{ \alpha L}{\alpha+L}}\right) \, W_2(\rho_k,\rho_k')^2$$
which will imply the desired claim.

Let $x_k \sim \rho_k$, $x_k' \sim \rho_k'$ be coupled with the optimal coupling, so $W_2(\rho_k,\rho_k') = \E[\|x_k-x_k'\|^2]$.
We evolve $x_k$, $x_k'$ via SLA~\eqref{Eq:SLA} with the same Gaussian noise $z_k$ (this is the synchronous coupling):
\begin{align*}
x_{k+1} + \epsilon \nabla f(x_{k+1}) &= x_k - \epsilon \nabla f(x_k) + \sqrt{2\epsilon} z_k \\
x_{k+1}' + \epsilon \nabla f(x_{k+1}') &= x_k' - \epsilon \nabla f(x_k') + \sqrt{2\epsilon} z_k.
\end{align*}
Subtracting and taking the squared norm, we obtain
\begin{align*}
\|x_{k+1}-x_{k+1} + \epsilon (\nabla f(x_{k+1}) - \nabla f(x_{k+1}')) \|^2 &= \|x_k - x_k' - \epsilon (\nabla f(x_k) - \nabla f(x_k'))\|^2.
\end{align*}
We expand both sides and use the inequality~\eqref{Eq:Nest}.
As in~$\S\ref{App:ULASynch}$, for $0 < \epsilon \le \frac{2}{\alpha+L}$, the right hand side is upper bounded by $(1-\frac{2\epsilon \alpha L}{\alpha+L}) \|x_k-x_k'\|^2$.
Similarly, the left hand side is lower bounded by $(1+\frac{2\epsilon \alpha L}{\alpha+L}) \|x_{k+1}-x_{k+1}'\|^2$.
Combining and taking expectation, we obtain
$$\E[\|x_{k+1}-x_{k+1}'\|^2] \le \left(\frac{1-2\epsilon\frac{\alpha L}{\alpha+L}}{1+2\epsilon\frac{ \alpha L}{\alpha+L}}\right) \E[\|x_k-x_k'\|^2] = \left(\frac{1-2\epsilon\frac{\alpha L}{\alpha+L}}{1+2\epsilon\frac{ \alpha L}{\alpha+L}}\right) W_2(\rho_k,\rho_k')^2.$$
Finally, by the definition of Wasserstein distance as the infimum over all coupling, we conclude
$$W_2(\rho_{k+1},\rho_{k+1}')^2 \le \left(\frac{1-2\epsilon\frac{\alpha L}{\alpha+L}}{1+2\epsilon\frac{ \alpha L}{\alpha+L}}\right) \, W_2(\rho_k,\rho_k')^2$$
as desired.
\hfill $\square$

\subsection{SLA and ULA for mixture of Gaussians}
\label{App:MG}

Let the target measure be an equal mixture of two Gaussians:
$$\nu = \frac{1}{2} \N(-a,I) + \frac{1}{2} \N(a,I)$$
for some $a \in \R^n$.
Then we can write $f = -\log \nu$ as
$$f(x) = \frac{1}{2}\|x\|^2-\log \cosh(\langle x,a \rangle) + \frac{1}{2} \|a\|^2 + \frac{n}{2} \log (2\pi).$$
The gradient of $f$ is $\nabla f(x) = x-\tanh(\langle x,a \rangle) a$.

\paragraph{ULA.}
The ULA iteration for mixture of Gaussians is:
$$x_{k+1} = (1-\epsilon) x_k + \epsilon \tanh(\langle x_k,a \rangle) a + \sqrt{2\epsilon} z_k$$
where $z_k \sim \N(0,I)$ is independent of $x_k$.
We can run this directly.

\paragraph{SLA.}
The ULA iteration for mixture of Gaussians is:
\begin{align}\label{Eq:SLAMixtGau}
(1+\epsilon) x_{k+1} - \epsilon \tanh(\langle x_{k+1},a \rangle ) a = (1-\epsilon) x_k + \epsilon \tanh(\langle x_k,a \rangle) a + \sqrt{4\epsilon} z_k
\end{align}
where $z_k \sim \N(0,I)$ is independent of $x_k$.

Let $v_k = \langle x_k, a \rangle \in \R$.
Taking the inner product of both sides of~\eqref{Eq:SLAMixtGau} with $a$ gives us
$$
(1+\epsilon) v_{k+1} - \epsilon \|a\|^2 \tanh(v_{k+1}) = (1-\epsilon) v_k + \epsilon \|a\|^2 \tanh(v_k) + \sqrt{4\epsilon} \langle z_k, a \rangle.
$$
Given $v_k$ and $z_k$, we can invert the equation above to solve for $v_{k+1}$, which is well-defined for small $\epsilon$.
Once we have $v_{k+1} = \langle x_{k+1}, a \rangle$, we can substitute it to~\eqref{Eq:SLAMixtGau} to solve for $x_{k+1}$:
$$x_{k+1} = \frac{1-\epsilon}{1+\epsilon} x_k + \frac{\epsilon}{1+\epsilon} \left( \tanh(v_k) + \tanh(v_{k+1})\right) a + \frac{\sqrt{4\epsilon}}{1+\epsilon} z_k.$$

\section{A review of discretization methods for a flow in space}
\label{App:Disc}

We provide brief review of discretization methods, and refer to~\cite[]{HLW06} for more detail.

\subsection{Integrator, order, and adjoint}

Let $\varphi = (\varphi_t)_{t \in \R}$ be the flow of the differential equation $\dot x = v(x)$ for a smooth vector field $v$ on $\R^n$.
An {\em integrator} for $\varphi$ is a family $A = (A_\epsilon)_{\epsilon \in \R}$ of algorithms $A_\epsilon \colon \R^n \to \R^n$, indexed by a {\em step size} $\epsilon \in \R$ (or $\epsilon$ in a neighborhood of $0$), such that
$A_0 = I$ is the identity map, $(\epsilon,x) \mapsto A_\epsilon(x)$ is smooth, and 
$$\lim_{\epsilon \to 0} \frac{A_\epsilon(x)-x}{\epsilon} = v(x) ~~~~ \forall \, x \in \R^n.$$
We say that the integrator $A$ has {\em order $p$}, for some $p \in \bN \cup \{+\infty\}$, if
$$\|\varphi_\epsilon(x) - A_\epsilon(x)\| \le O(\epsilon^{p+1})~~~~\text{ as } \epsilon \to 0, ~ \forall\, x \in \R^n$$
where the bound on the right hand side above may depend on $x$.

The {\em adjoint} of an integrator $A = (A_\epsilon)_{\epsilon \in \R}$ is another integrator $A^\ast = (A_\epsilon^\ast)_{\epsilon \in \R}$ defined by
$$A_\epsilon^\ast = (A_{-\epsilon})^{-1}.$$
(If $A_\epsilon$ is defined for $\epsilon$ in a symmetric interval around $0$, then $A_\epsilon^\ast$ is defined for $\epsilon$ in the same interval.)
This definition of adjoint satisfies $(A^\ast)^\ast = A$ and $(A \circ B)^\ast = B^\ast \circ A^\ast$ for any integrators $A$ and $B$, where $A \circ B = (A_{\epsilon} \circ B_{\epsilon})_{\epsilon \in \R}$ is the composition of two integrators. 

We say that an integrator $A$ is {\em symmetric} if it is self-adjoint: $A^\ast = A.$
We can symmetrize any integrator $A$ by composing it with its adjoint in either order; this gives two symmetric integrators $A \circ A^\ast$ and $A^\ast \circ A$, which in general are not equal.
We recall the following results.

\begin{lemma}[{\cite[Theorem~3.2]{HLW06}}]
\label{Lem:Int}
\begin{enumerate}
\item An integrator $A$ and its adjoint $A^\ast$ have the same order.
\item If $A$ is symmetric, then its order is even.
\end{enumerate}
\end{lemma}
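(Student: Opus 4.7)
The plan is to exploit two structural identities: the flow identity $\varphi_\epsilon^{-1} = \varphi_{-\epsilon}$ coming from autonomy of $\dot x = v(x)$, and the definition $A^\ast_\epsilon = (A_{-\epsilon})^{-1}$. Throughout I will work with the Taylor-type expansion $A_\epsilon(x) = \varphi_\epsilon(x) + \epsilon^{p+1} C(x) + O(\epsilon^{p+2})$, where $C \colon \R^n \to \R^n$ is smooth and captures the leading-order local truncation error. The smoothness of $v$ (hence of $\varphi$) and of the map $(\epsilon,x) \mapsto A_\epsilon(x)$ guarantees the existence of such an expansion, and the fact that $\varphi_\epsilon$ is a smooth family of diffeomorphisms near $\epsilon = 0$ is what will let me propagate error estimates through function composition.

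For Part 1, substituting $-\epsilon$ in the expansion yields $A_{-\epsilon}(x) = \varphi_{-\epsilon}(x) + O(\epsilon^{p+1})$. Setting $y = A^\ast_\epsilon(x)$, so by definition $A_{-\epsilon}(y) = x$, I rearrange to $\varphi_{-\epsilon}(y) = x + O(\epsilon^{p+1})$ and apply $\varphi_\epsilon$ to both sides. Using $\varphi_\epsilon \circ \varphi_{-\epsilon} = I$ together with local Lipschitzness of $\varphi_\epsilon$ (uniform in $\epsilon$ near $0$), this gives $y = \varphi_\epsilon(x) + O(\epsilon^{p+1})$, so $A^\ast$ has order at least $p$. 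The reverse inequality follows from $(A^\ast)^\ast = A$, yielding equality of orders.

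For Part 2, I refine the same computation by tracking the leading coefficient. Writing out $A_{-\epsilon}(y) = x$ with the full two-term expansion and using $y = \varphi_\epsilon(x) + O(\epsilon^{p+1})$ from Part 1, a single Taylor expansion of $\varphi_\epsilon$ around $x$ together with the estimate $C(y) - C(x) = O(\epsilon)$ produces $A^\ast_\epsilon(x) = \varphi_\epsilon(x) + (-1)^p \epsilon^{p+1} C(x) + O(\epsilon^{p+2})$. The sign flip $(-1)^p$ comes from the single factor of $(-\epsilon)^{p+1}$ inherited from the expansion of $A_{-\epsilon}$. If $A = A^\ast$, matching leading coefficients in the expansions of $A_\epsilon$ and $A^\ast_\epsilon$ forces $C(x) = (-1)^p C(x)$; for odd $p$ this requires $C \equiv 0$, so the actual order is at least $p+1$, and iterating the argument shows that symmetric methods cannot stop at an odd order.

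The main technical obstacle is the bookkeeping in Part 2: isolating the leading coefficient of $A^\ast_\epsilon(x)$ requires inverting $A_{-\epsilon}$ implicitly and carefully collecting all contributions at order $\epsilon^{p+1}$ while showing the cross-terms land in $O(\epsilon^{p+2})$. The observation that keeps this clean is that the Jacobian of $\varphi_\epsilon$ is the identity plus $O(\epsilon)$, so the corrections from inverting $\varphi_{-\epsilon}$ and from evaluating $C$ at $y$ versus $x$ are both subleading — they only affect $O(\epsilon^{p+2})$ terms, leaving the leading coefficient determined purely by the sign flip $(-\epsilon)^{p+1}$.
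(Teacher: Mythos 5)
Your argument is correct and is essentially the standard proof of the cited result (the paper itself gives no proof, deferring to Theorem~3.2 of \cite{HLW06}): expand $A_\epsilon(x) = \varphi_\epsilon(x) + \epsilon^{p+1}C(x) + O(\epsilon^{p+2})$, use $\varphi_{-\epsilon}^{-1} = \varphi_\epsilon$ and the fact that the Jacobian of $\varphi_\epsilon$ is $I + O(\epsilon)$ to show the adjoint has leading error coefficient $(-1)^p C(x)$, which gives both the equality of orders and, by matching coefficients when $A = A^\ast$, the evenness of the order of a symmetric method. The only points worth making explicit in a polished write-up are that the expansion with a smooth coefficient $C$ (hence locally uniform remainder bounds, needed since $y$ depends on $\epsilon$) follows from the assumed smoothness of $(\epsilon,x) \mapsto A_\epsilon(x)$ via Taylor's theorem, and that one takes $p$ to be the maximal (finite) order so that $C \not\equiv 0$, making the contradiction for odd $p$ immediate.
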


For example, the exact flow $\varphi = (\varphi_t)_{t \in \R}$ is a symmetric integrator of order $+ \infty$.
This is the ideal integrator, but typically not computable in practice.
There are two first-order integrators that come from basic discretizations: the forward (explicit) method, and the backward (implicit) method.
We can also symmetrize them to obtain second-order integrators.
We describe them further below.

\subsection{The forward method}
\label{Sec:Fwd}
 
We wish to approximate the differential equation $\dot x = v(x)$.
The forward method 
$\F = (\F_\epsilon)_{\epsilon \in \R}$ 
uses the approximation $\frac{1}{\epsilon}(\F_\epsilon(x)-x) = v(x)$, or equivalently,
\begin{align}\label{Eq:Fw1}
\F_\epsilon(x) = x + \epsilon v(x).
\end{align}
We also write
\begin{align}\label{Eq:Fw2}
\F_\epsilon = I + \epsilon v
\end{align}
where $I$ is the identity map.
This is known as the forward or explicit Euler method, because given where we are now, we can determine where to go next with little computation.
If $v$ is smooth and Lipschitz, 
then the forward method 
$F_\epsilon$ is a diffeomorphism for $\epsilon$ in a small enough neighborhood around $0$, therefore $\F$ is an integrator.
Furthermore, $\F$ has order $1$ because $\F_\epsilon$ is performing a first-order approximation.

\subsection{The backward method}

We wish to approximate the differential equation $\dot x = v(x)$.
The backward method $\B = (\B_\epsilon)_{\epsilon \in \R}$ 
uses the approximation $\frac{1}{\epsilon}(\B_\epsilon(x)-x) = v(\B_\epsilon(x))$, or equivalently,
\begin{align}\label{Eq:Bw1}
\B_\epsilon(x) - \epsilon v(\B_\epsilon(x)) = x.
\end{align}
Therefore,
\begin{align}\label{Eq:Bw2}
\B_\epsilon = (I - \epsilon v)^{-1}.
\end{align}
This is known as the backward or implicit Euler method, because to determine where to go next we need to solve an implicit equation~\eqref{Eq:Bw1}, or equivalently compute the inverse of an operator~\eqref{Eq:Bw2}.
As in the forward method, if $v$ is smooth and Lipschitz, then for small $|\epsilon|$, the map $I-\epsilon v$ is a diffeomorphism.
So $\B_\epsilon = (I-\epsilon v)^{-1}$ is also a diffeomorphism, and therefore $\B$ is an integrator.

Furthermore, observe that we can write
$$\B_\epsilon = (\F_{-\epsilon})^{-1} = \F_\epsilon^\ast.$$
Therefore, the backward method is the adjoint of the forward method, and thus by Lemma~\ref{Lem:Int} they have the same order $1$.

\subsection{Symmetrized methods}

We can consider the symmetrized versions of the basic methods.
There are two versions: the symmetrized forward method---which applies the forward method followed by the backward method---is known as the trapezoid rule, while the symmetrized backward method---which applies the backward method followed by the forward method---is known as the implicit midpoint rule.
Both have order 2, being symmetric methods.
We can also consider compositions of basic methods with varying step sizes, properly chosen to increase the order of the resulting algorithm; see~\cite[$\S$II.II.4]{HLW06}.

\subsection{On order of error and order of bias}
\label{App:DiscOrder}

We show that the order of the discretization error of an algorithm is the same as the order of the bias, at least under strong convexity.
Thus, if we can find a discretization algorithm of high order, then we are guaranteed the bias will be small.

Suppose the vector field $v$ satisfies the following monotonicity property for some $\alpha > 0$:
\begin{align}\label{Eq:vCont}
\langle v(x)-v(y) , x-y \rangle \le -\alpha \|x-y\|^2~~~~\forall \, x,y \in \R^n.
\end{align}
This is satisfied, for example, for gradient flow of a strongly convex function, i.e., $v(x) = -\nabla f(x)$ where $f \colon \R^n \to \R$ is $\alpha$-strongly convex.
The condition above implies any two solutions of the flow of $v$ contract exponentially fast, and in particular there is a unique fixed point $x^\ast$.
We now show that any integrator for the flow of $v$ will also converge exponentially fast to a biased limit of the same order as the discretization error.

\begin{lemma}
Let $A = (A_\epsilon)_{\epsilon \in \R}$ be an integrator of order $p \in \bN$ for the flow of $v$ satisfying~\eqref{Eq:vCont}.
Let $x_0 \in \R^n$ and define iteratively $x_{k+1} = A_\epsilon(x_k)$.
Then
$$\|x_k-x^\ast\| \le O(\epsilon^p) + e^{-\alpha \epsilon k} \|x_0-x^\ast\|.$$
In particular, the bias of $A_\epsilon$ is $O(\epsilon^p)$.
\end{lemma}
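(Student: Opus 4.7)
The plan is to combine two ingredients: exponential contraction of the exact flow $\varphi$ under the monotonicity hypothesis~\eqref{Eq:vCont}, and the one-step truncation error supplied by the order-$p$ property of $A$.

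First I would show that the flow map $\varphi_\epsilon$ is a Euclidean $e^{-\alpha \epsilon}$-contraction. Given $x,y\in\R^n$, set $u(t)=\varphi_t(x)-\varphi_t(y)$; then $\frac{d}{dt}\|u(t)\|^2 = 2\langle v(\varphi_t(x))-v(\varphi_t(y)),\,u(t)\rangle \le -2\alpha \|u(t)\|^2$ by~\eqref{Eq:vCont}, so Gr\"onwall gives $\|\varphi_\epsilon(x)-\varphi_\epsilon(y)\|\le e^{-\alpha\epsilon}\|x-y\|$. Since $x^\ast$ is a fixed point of the flow (hence $v(x^\ast)=0$ and $\varphi_t(x^\ast)=x^\ast$), this specializes to $\|\varphi_\epsilon(x)-x^\ast\|\le e^{-\alpha\epsilon}\|x-x^\ast\|$.

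Next, decompose $A_\epsilon(x)-x^\ast = (A_\epsilon(x)-\varphi_\epsilon(x)) + (\varphi_\epsilon(x)-x^\ast)$; bounding the first term by $C\epsilon^{p+1}$ via order~$p$ and the second by the contraction estimate yields the one-step recursion
\[
\|A_\epsilon(x)-x^\ast\| \le e^{-\alpha\epsilon}\|x-x^\ast\| + C\,\epsilon^{p+1}.
\]
Iterating with $x=x_k$ and summing a geometric series gives
\[
\|x_k-x^\ast\| \le e^{-\alpha\epsilon k}\|x_0-x^\ast\| + C\epsilon^{p+1}\sum_{j=0}^{k-1} e^{-\alpha\epsilon j} \le e^{-\alpha\epsilon k}\|x_0-x^\ast\| + \frac{C\,\epsilon^{p+1}}{1-e^{-\alpha\epsilon}}.
\]
Using $1-e^{-\alpha\epsilon}\ge \alpha\epsilon/2$ for small $\epsilon$ collapses the second term to $O(\epsilon^p)$, and letting $k\to\infty$ reads off the asymptotic bias.

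The main obstacle is that the order-$p$ constant in the paper's definition is allowed to depend on the base point, so a single $C$ for the whole iteration is not immediate. The standard fix is an invariance argument: take $R = \max\{\|x_0-x^\ast\|,\,2C^\star\epsilon^p/\alpha\}$, where $C^\star$ is the supremum of the local order-$p$ constant over the closed ball $\bar B(x^\ast,R)$. The recursion above, applied to any $x\in\bar B(x^\ast,R)$, then produces $\|A_\epsilon(x)-x^\ast\|\le e^{-\alpha\epsilon}R + C^\star\epsilon^{p+1} \le R$ for $\epsilon$ sufficiently small, so the ball is forward-invariant and a single constant $C=C^\star$ is valid at every step. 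For small $\epsilon$, $R$ is of order $\|x_0-x^\ast\|$, so all constants absorbed into the $O(\epsilon^p)$ depend only on $v$, $x_0$, and $\alpha$.
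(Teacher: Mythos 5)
Your proof is correct, and it rests on the same two ingredients as the paper's — exponential contraction of the exact flow under~\eqref{Eq:vCont} and the $O(\epsilon^{p+1})$ one-step truncation error — but the decomposition is different. The paper compares the iterates to the exact trajectory started at $x_0$: it writes $\|x_k-x^\ast\| \le \|x_k-\varphi_{\epsilon k}(x_0)\| + \|\varphi_{\epsilon k}(x_0)-x^\ast\|$ and runs the recursion on the deviation $\delta_k = \|x_k - \varphi_{\epsilon k}(x_0)\|$ (with $\delta_0=0$), handling the distance to $x^\ast$ separately by the contraction of the continuous flow. You instead use that $x^\ast$ is a fixed point of $\varphi_\epsilon$ and fold everything into a single recursion $\|x_{k+1}-x^\ast\| \le e^{-\alpha\epsilon}\|x_k-x^\ast\| + C\epsilon^{p+1}$, which is slightly more direct and reads off the bias immediately as $k\to\infty$. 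Your final paragraph also confronts a point the paper passes over silently: the order-$p$ bound in the definition is pointwise in $x$, yet the recursion needs it at every iterate; your forward-invariant ball $\bar B(x^\ast,R)$ fixes this cleanly, with the one caveat that finiteness of $C^\star = \sup$ of the local constants over the ball should be justified (it follows from the assumed joint smoothness of $(\epsilon,x)\mapsto A_\epsilon(x)$ and of the flow, via a Taylor remainder uniform on compacts). So the proposal is a valid, mildly streamlined variant that is in fact a bit more careful about uniformity than the paper's own argument.
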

\begin{proof}
Let $\tilde x(t) = \varphi_t(x_0)$ be the flow of $v$ starting from $\tilde x(0)$.
We compare $x_k$ with $\tilde x(\epsilon k)$.
By triangle inequality,
$$\|x_k-x^\ast\| \le \|x_k-\tilde x(\epsilon k)\| + \|\tilde x(\epsilon k)-x^\ast\|.$$
The first term is upper bounded by $e^{-\alpha \epsilon k} \|\tilde x(0) - x^\ast\|$ by property~\eqref{Eq:vCont}.
We need to show the second term $\delta_k := \|x_k - \tilde x(\epsilon k)\|$ is $O(\epsilon^p)$.

Consider the flow $\varphi_\epsilon(x_{k-1})$ of $v$ starting at $x_{k-1}$.
We also write $x_k = A_\epsilon(x_{k-1})$ and $\tilde x(\epsilon k) = \varphi_\epsilon(\tilde x(\epsilon (k-1)))$.
By triangle inequality,
$$\delta_k = \|x_k - \tilde x(\epsilon k)\| \le \|A_\epsilon(x_{k-1}) - \varphi_\epsilon(x_{k-1})\| + \|\varphi_\epsilon(x_{k-1}) - \varphi_\epsilon(\tilde x(\epsilon (k-1)))\|.$$
The first term above is $O(\epsilon^{p+1})$ since $A_\epsilon$ is an order-$p$ integrator.
The second term above is upper bounded by $e^{-\alpha \epsilon}\|x_{k-1} - \tilde x(\epsilon(k-1))\| = e^{-\alpha \epsilon} \delta_{k-1}$ by the contraction property of $\varphi_t$, by~\eqref{Eq:vCont}.
Then
$$\delta_k \le O(\epsilon^{p+1}) + e^{-\alpha \epsilon} \delta_{k-1}.$$
Unfolding the recursion with $\delta_0 = 0$ (since $\tilde x(0) = x_0$), we get
$$\delta_k \le \frac{O(\epsilon^{p+1})(1-e^{-\alpha \epsilon k})}{1-e^{-\alpha \epsilon}} \,=\, O(\epsilon^p)$$
as desired.
\end{proof}

\section{A review of optimization in Riemannian manifold}
\label{App:OptM}

We review optimization in a smooth Riemannian manifold.
We focus on gradient flow and simple discretization methods including the forward and backward methods.
We discuss sufficient conditions ensuring exponential convergence rate including gradient domination and strong convexity.

Let $\M$ be a complete smooth Riemannian manifold of dimension $n \ge 1$.
Let
$$f \colon \M \to \R$$
be a smooth objective function.
We assume $f$ is bounded below and achieves its minimum in $\M$ (not necessarily at a unique point), so the set of minimizers
$$x^\ast(f) = \arg\min_{x \in \M} f(x)$$
is not empty.
Let $\min f \equiv \min_{x \in \M} f(x)$ be the minimum value of $f$.

We want to solve the optimization problem
$$\min_{x \in \M} f(x)$$
which is equivalent to the problem of reaching the set of minimizers $x^\ast(f)$.
The basic dynamics that achieves this task is gradient flow.
We first recall some conditions that ensure exponential convergence rate.

\subsection{Conditions ensuring exponential convergence}
\label{Sec:CondExp}

We say that $f$ is {\em $\alpha$-strongly convex} for some $\alpha > 0$ if 
$$\Hess_x f \succeq \alpha I~~~\forall\, x \in \M.$$
Here recall the Hessian $\Hess_x f$ is the bilinear form on the tangent space $\T_x\M$ that measures the second rate of change of $f$ along geodesics, and the condition above means $(\Hess_x f)(v) \equiv (\Hess_x f)(v,v) \ge \alpha \|v\|_x^2$ for all $v \in \T_x\M$.

We say that $f$ is {\em $\alpha$-gradient dominated} for some $\alpha > 0$ if
$$\|\grad_x f\|^2_x \ge 2\alpha (f(x)-\min f)~~~\forall\, x \in \M.$$
For $\M = \R^n$, this is known as the Polyak-\L{}ojaciewicz condition or the Kurdyka-\L{}ojaciewicz condition~\cite[]{P63,L63}.
In the setting of $\M = \P_2(\R^n)$ with the relative entropy functional, this is known as the logarithmic Sobolev inequality~\cite[]{OV00}.
Observe that if $f$ is gradient dominated, then any stationary point of $f$ (where the gradient vanishes) must be a global minimum.

We say that $f$ has {\em $\alpha$-sufficient growth} for some $\alpha > 0$ if
$$f(x)-\min f \ge \frac{\alpha}{2} d(x,x^\ast(f))^2~~~\forall\, x \in \M.$$ 
Here $d(x,x^\ast(f)) = \inf_{x^\ast \in x^\ast(f)} d(x,x^\ast)$ is the distance from $x$ to the minimizer set $x^\ast(f)$.
In the setting of $\M = \P_2(\R^n)$ with the relative entropy functional, this is known as the Talagrand inequality~\cite[]{OV00}.

We recall the following implications from~\cite[Propositions~1' \& 2']{OV00}, 
which follow by interpolating the inequalities along the gradient flow.

\begin{lemma}\label{Lem:SC}
Let $\alpha > 0$.
\begin{enumerate}
  \item If $f$ is $\alpha$-strongly convex, then $f$ is $\alpha$-gradient dominated.
  \item If $f$ is $\alpha$-gradient dominated, then $f$ has $\alpha$-sufficient growth.
\end{enumerate}
\end{lemma}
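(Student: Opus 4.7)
The plan is to prove each implication by a short argument mirroring the Euclidean case: (1) optimize the geodesic strong-convexity inequality in the tangent vector, and (2) interpolate along the gradient flow in the style of Otto--Villani.

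For (1), $\alpha$-strong convexity along geodesics yields
$$f(\exp_x(v)) \,\ge\, f(x) + \langle \grad_x f,\, v \rangle + \frac{\alpha}{2}\|v\|_x^2 \qquad \forall\, x \in \M,\ v \in \T_x \M.$$
The right-hand side is a quadratic in $v$, minimized at $v^\ast = -\alpha^{-1}\grad_x f$ with minimum value $f(x) - \frac{1}{2\alpha}\|\grad_x f\|_x^2$. Hence every $y \in \M$ satisfies $f(y) \ge f(x) - \frac{1}{2\alpha}\|\grad_x f\|_x^2$, and taking the infimum over $y$ gives $\|\grad_x f\|_x^2 \ge 2\alpha(f(x)-\min f)$, which is $\alpha$-gradient domination.

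For (2), I would run the gradient flow $\dot x(t) = -\grad_{x(t)} f$ from $x(0) = x$. The energy identity $\frac{d}{dt}(f(x(t))-\min f) = -\|\grad_{x(t)} f\|^2$ together with $\alpha$-gradient domination gives exponential decay $f(x(t))-\min f \le e^{-2\alpha t}(f(x)-\min f)$, so $x(t)$ approaches $x^\ast(f)$ as $t \to \infty$. I would bound the distance by the curve length and use $\|\grad f\|^2 \ge 2\alpha(f-\min f)$ to trade one factor of $\|\grad f\|$ for $\sqrt{2\alpha(f-\min f)}$, obtaining
$$\|\grad_{x(t)} f\| \,=\, \frac{\|\grad_{x(t)} f\|^2}{\|\grad_{x(t)} f\|} \,\le\, -\frac{1}{\sqrt{2\alpha}}\cdot\frac{\frac{d}{dt}(f(x(t))-\min f)}{\sqrt{f(x(t))-\min f}} \,=\, -\sqrt{\tfrac{2}{\alpha}}\,\frac{d}{dt}\sqrt{f(x(t))-\min f}.$$
Integrating from $0$ to $\infty$ telescopes to $\sqrt{2(f(x)-\min f)/\alpha}$, so $d(x, x^\ast(f)) \le \int_0^\infty \|\grad_{x(t)} f\|\,dt \le \sqrt{2(f(x)-\min f)/\alpha}$, which is $\alpha$-sufficient growth after squaring.

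The main delicacies are Riemannian rather than analytical: in (1) the geodesic strong-convexity inequality must be valid globally on $\M$, which is automatic on a Hadamard manifold and holds along Wasserstein displacement interpolation on $\P_2(\R^n)$; in (2) the finite length bound together with completeness of $\M$ is needed to conclude that the gradient flow actually converges to a point of $x^\ast(f)$, rather than merely having bounded length. Both are standard in the settings considered here.
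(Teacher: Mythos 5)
Your proof is correct. The paper itself gives no self-contained argument: it cites Otto--Villani (Propositions 1' and 2') and remarks that both implications ``follow by interpolating the inequalities along the gradient flow.'' Your part (2) is exactly that interpolation argument --- run $\dot x=-\grad_x f$, convert gradient domination into the differential inequality for $\sqrt{f-\min f}$, and bound $d(x,x^\ast(f))$ by the curve length $\int_0^\infty\|\grad_{x(t)}f\|\,dt\le\sqrt{2(f(x)-\min f)/\alpha}$ --- so there you coincide with the paper's intended route (this is the standard LSI-implies-Talagrand proof transplanted to a general manifold). Your part (1), by contrast, avoids the flow altogether and just minimizes the strong-convexity quadratic over $v\in\T_x\M$; this is more elementary than the semigroup/flow interpolation underlying the cited propositions, at the price of needing the strong-convexity inequality to hold globally from the basepoint $x$ to every $y$ (which, as you note, is fine on Hadamard manifolds and along Wasserstein geodesics for displacement-convex functionals, the settings the paper actually uses). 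Two small points to tidy if you write this out in full: in (2) the divisions by $\|\grad f\|$ and $\sqrt{f-\min f}$ require stopping the argument at the first time the flow hits $\min f$ (after which it is stationary and there is nothing to prove), and the finite-length bound plus completeness is what lets you identify the limit point as an element of $x^\ast(f)$ --- you flag both, and neither affects correctness.
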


\subsection{Gradient flow}

The basic dynamics for minimizing a function $f$ is the {\em gradient flow}:
\begin{align}\label{Eq:GFGen}
\dot x = -\grad_x f.
\end{align}
Here $\grad_x f \in \T_x\M$ is the metric gradient of $f$ at $x$, which in local coordinate is given by multiplying the vector of partial derivatives of $f$ by the inverse metric at $x$.
Gradient flow is a descent flow:
$$\frac{d}{dt} f(x) = \langle \grad_x f, \dot x \rangle_x = -\|\grad_x f\|^2_x \le 0.$$
We are interested in quantifying how fast the convergence occurs.

\subsubsection{Exponential contraction of solutions under strong convexity}

Strong convexity is the weakest condition needed for exponential contraction between solutions.
Indeed, suppose $f$ is $\alpha$-strongly convex for some $\alpha > 0$, and
let $x(t)$, $y(t)$ be two solutions of the gradient flow~\eqref{Eq:GFGen}.
Assume $x(t)$ and $y(t)$ are close enough so that the geodesic connecting them is minimizing.
Then the gradient of the squared distance $x \mapsto d(x,y(t))^2$ is $\grad_{x(t)} d(\cdot, y(t))^2 = -2\log_{x(t)}(y(t))$ (see for example~\cite[]{F06}).
Here $\log_x y \in \T_x M$ is the logarithm map, which is the inverse of the exponential map, i.e., $v = \log_x y$ if and only if $y = \exp_x(v)$.
Similarly, the gradient of $y \mapsto d(y,x(t))^2$ is $\grad_{y(t)} d(\cdot, x(t))^2 = -2\log_{y(t)}(x(t))$.
Then we have (hiding dependence on $t$ for simplicity):
\begin{align}
\frac{d}{dt} d(x(t),y(t))^2 
&= -2 \big\langle \log_{x}(y), \dot x \big\rangle_{x} - 2 \big\langle \log_{y}(x), \dot y \big\rangle_{y} \notag \\
&=  2 \big\langle \log_{x}(y), \grad_{x} f \big\rangle_{x} + 2 \big\langle \log_{y}(x), \grad_{y} f \big\rangle_{y}.  \label{Eq:GFExp}
\end{align}
On the other hand, by the strong convexity of $f$, we have
\begin{align*}
f(y) &\ge f(x) + \langle \grad_{x} f, \log_x y \rangle_x + \frac{\alpha}{2} d(x,y)^2 \\
f(x) &\ge f(y) + \langle \grad_{y} f, \log_y x \rangle_x + \frac{\alpha}{2} d(x,y)^2.
\end{align*}
Summing yields $\langle \grad_{x} f, \log_x y \rangle_x + \langle \grad_{y} f, \log_y x \rangle_x \le -\alpha d(x,y)^2$.
Substituting to~\eqref{Eq:GFExp} yields
$$\frac{d}{dt} d(x(t),y(t))^2 \le -2\alpha d(x(t),y(t))^2.$$
Therefore, $d(x(t),y(t))^2 \le e^{-2\alpha t} d(x(0),y(0))^2$, as desired.

\subsubsection{Exponential convergence of function value under gradient domination}

Gradient domination is the weakest condition needed for exponential convergence of function value.
Indeed, suppose $f$ is $\alpha$-gradient dominated for some $\alpha > 0$.
Then along the gradient flow,
$$\frac{d}{dt} (f(x) - \min f) = -\|\grad_x f\|^2_x \le -2\alpha (f(x)-\min f).$$
Therefore, $f(x(t)) - \min f \le e^{-2\alpha t} (f(x(0)) - \min f)$.
This also implies exponential convergence to the minimizer, since gradient domination implies sufficient growth.

\subsection{Gradient descent}

Gradient descent algorithm for minimizing $f$ with step size $\epsilon > 0$ is the iteration
$$x_{k+1} = \exp_{x_k}(-\epsilon \grad_{x_k} f).$$
Here we assume $\epsilon$ is small enough so the geodesic from $x_k$ to $x_{k+1}$ is minimizing.

As in gradient flow, in general we can get exponential contraction between solutions of gradient descent under strong convexity, and get exponential convergence in function value under gradient domination.
However, we now also need a smoothness assumption on $f$.
Specifically, suppose $f$ is $\alpha$-gradient dominated for some $\alpha > 0$.
Assume further $f$ is $L$-smooth for some $L > 0$, which means $\Hess f \preceq LI$.
This implies, for $x,y$ sufficiently close,
$$f(y) \le f(x) + \langle \grad_x f, \log_x y \rangle_x + \frac{L}{2} d(x,y)^2.$$
Plugging in $x = x_k$, $y = x_{k+1}$ along gradient descent with $d(x,y)^2 = \epsilon^2 \|\grad_{x_k} f\|_{x_k}^2$, we obtain
\begin{align}\label{Eq:GDEq}
f(x_{k+1}) \le f(x_k) - \epsilon \left(1-\frac{\epsilon L}{2}\right) \|\grad_{x_k} f\|^2_{x_k}.
\end{align}
If $0 < \epsilon \le \frac{2}{L}$, then we can chain the last term above with the gradient domination inequality to conclude that
$$f(x_{k+1}) - \min f \le \left(1-2\alpha \epsilon \left(1-\frac{\epsilon L}{2}\right) \right) (f(x_k)-\min f).$$
Unrolling the recursion gives the exponential convergence in function value:
$$f(x_k)-\min f \le \left(1-2\alpha \epsilon \left(1-\frac{\epsilon L}{2}\right) \right)^k (f(x_0)-\min f).$$
For example, if $\epsilon = \frac{1}{L}$, then the rate is $(1-\frac{\alpha}{L})^k$.
This also implies exponential convergence in the distance to minimizer by the sufficient growth property.
See also, for example,~\cite[]{FO98,ZS16,KNS16}.

\subsection{Proximal gradient}

The proximal gradient method for minimizing $f$ with step size $\epsilon > 0$ is
\begin{align}\label{Eq:PGGen}
x_{k+1} = \arg\min_{x \in M} \left\{ f(x) + \frac{1}{2\epsilon} d(x,x_k)^2 \right\}.
\end{align}
Assume $\epsilon > 0$ is small enough so the minimizer above is unique and within the injectivity radius of $x_k$.
Then the minimizer $x_{k+1}$ is characterized by $\grad_{x_{k+1}} (f + \frac{1}{2\epsilon} d(\cdot, x_k)^2) = 0$, or equivalently,
$$\log_{x_{k+1}} x_k = \epsilon \grad_{x_{k+1}} f.$$
This is equivalent to the implicit update $\exp_{x_{k+1}}(\epsilon \grad_{x_{k+1}} f) = x_k$, which generalizes the usual update $x_{k+1} + \epsilon \nabla f(x_{k+1}) = x_k$ in the Euclidean case.

As before, in general we can get exponential contraction between solutions under strong convexity, and get exponential convergence in function value under gradient domination.
Unlike in gradient descent, here we do not need a smoothness assumption on $f$, but we need to solve the implicit update above.

Suppose $f$ is $\alpha$-gradient dominated for some $\alpha > 0$.
Since $x_{k+1}$ is the minimizer of~\eqref{Eq:PGGen},
$$f(x_{k+1}) + \frac{1}{2\epsilon} d(x_{k+1},x_k)^2 \le f(x_k).$$
Equivalently, since $d(x_{k+1},x_k)^2 = \epsilon^2 \|\grad_{x_{k+1}} f\|^2_{x_{k+1}}$,
$$f(x_{k+1}) - f(x_k) \le -\frac{1}{2\epsilon} d(x_{k+1},x_k)^2 = -\frac{\epsilon}{2} \|\grad_{x_{k+1}} f\|^2_{x_{k+1}}.$$
Now using the gradient domination inequality and collecting the terms give us
\begin{align}\label{Eq:PGEq}
f(x_{k+1}) - \min f \le \frac{1}{1 + \alpha \epsilon} (f(x_k)-\min f).
\end{align}
Unfolding the recursion, we conclude that
$$f(x_k)-\min f \le \frac{1}{(1+\alpha \epsilon)^k} (f(x_0)-\min f).$$
See also, for example,~\cite[]{FO02,BEtAl16}

\subsection{Symmetrized forward method}

The symmetrized forward method for minimizing $f$ is the composition of the gradient descent and the proximal gradient methods:
\begin{subequations}
\begin{align}
x_{k+\frac{1}{2}} &= \exp_{x_k}(-\epsilon \grad_{x_k} f) \label{Eq:SGb} \\
x_{k+1} &= \arg\min_{x \in M} \left\{ f(x) + \frac{1}{2\epsilon} d(x,x_{k+\frac{1}{2}})^2 \right\}. \label{Eq:SGa}
\end{align}
\end{subequations}
Note that this is the Forward-Backward algorithm for composite optimization, applied to the self-decomposition $2f = f+f$.

Suppose $f$ is $\alpha$-gradient dominated and $L$-smooth for some $0 < \alpha \le L$, and let $0 < \epsilon \le \frac{2}{L}$.
Then as in~\eqref{Eq:GDEq}, the first update~\eqref{Eq:SGb} above implies
$$f(x_{k+\frac{1}{2}}) - f(x_{k+1}) \le  -\epsilon \left(1-\frac{\epsilon L}{2}\right) \|\grad_{x_k} f\|^2_{x_k}.$$
Similarly, as in~\eqref{Eq:PGEq}, the second update~\eqref{Eq:SGa} above implies
$$f(x_{k+1}) - f(x_{k+\frac{1}{2}}) \le -\frac{\epsilon}{2} \|\grad_{x_{k+1}} f\|^2_{x_{k+1}}.$$
Summing the two inequalities above and using the gradient domination inequality give us
$$f(x_{k+1})-f(x_k) \le -2\alpha \epsilon \left(1-\frac{\epsilon L}{2}\right)(f(x_k)-\min f) - \alpha \epsilon (f(x_{k+1})-\min f).$$
Collecting terms and unrolling the recursion, we conclude the exponential convergence rate
$$f(x_k)-\min f \le \left(\frac{1-2\alpha \epsilon \left(1-\frac{\epsilon L}{2}\right)}{1+\alpha \epsilon} \right)^k (f(x_0) - \min f).$$
For example, if $\epsilon = \frac{1}{L}$, then the rate is $\left(\frac{1-\frac{\alpha}{L}}{1+\frac{\alpha}{L}}\right)^k$, which is slightly better than for gradient descent.

\section{A brief review of the Wasserstein metric in the space of measures}
\label{App:Wass}

Let $\P \equiv \P_2(\R^n)$ be the space of probability measures $\rho$ on $\R^n$ with finite second moments.
We provide a brief review of the Wasserstein metric, and refer the reader for more detail to~\cite{Vil03,Vil08,CG03,OV00}.

The Wasserstein metric $W_2$ on $\P$ is defined as
\begin{align}\label{Eq:Wass}
W_2(\rho,\nu)^2 = \inf_{X \sim \rho, Y \sim \nu} \E[\|X-Y\|^2]
\end{align}
where the infimum is over all coupling of random variables $(X,Y)$ with $X \sim \rho$ and $Y \sim \nu$.
This formally endows $\P$ with an infinite-dimensional smooth Riemannian metric.

A tangent vector/function $R \in \T_\rho P$ is of the form
$$R = -\nabla \cdot (\rho \nabla \phi)$$
for some function $\phi \colon \R^n \to \R$.
We also write $R \equiv \nabla \phi$.
The squared norm of $R \equiv \nabla \phi$ is
$$\|R\|^2_\rho = \|\nabla \phi\|^2_\rho = \int \rho \|\nabla \phi\|^2 = \E_\rho[\|\nabla \phi\|^2].$$

The gradient of a functional $F \colon \P \to \R$ is
$$\grad_\rho F = -\nabla \cdot \left( \rho \nabla \frac{\delta F}{\delta \rho} \right)$$
where $\frac{\delta F}{\delta \rho}(x)$ is the naive ($L^2$) derivative of $F$ with respect to $\rho(x)$.
There are also Hessian formulae for specific functional classes~\cite[$\S15$]{Vil08}.

When $\rho$ and $\nu$ are smooth and absolutely continuous, the optimal coupling in the Wasserstein distance~\eqref{Eq:Wass} is unique and induced by an optimal transport map which is the gradient of a convex function: $(X,Y) = (X, \nabla \phi(X))$, where $\phi \colon \R^n \to \R$ is convex and satisfies $(\nabla \phi)_\# \rho = \nu$.
Therefore, if there is a convex function whose gradient pushes forward one measure to another, then that gradient must be the optimal transport map.
Furthermore, the geodesic in the Wasserstein metric connecting one measure to another is obtained by linearly interpolating the optimal transport map in space and taking the pushforward. 
We summarize this fact in the following lemma, which we use in our discussion in this paper.

\begin{lemma}
Let $\phi \colon \R^n \to \R$ satisfy $\nabla^2 \phi \succeq KI$ for some $K \in \R$.
For $0 \le \epsilon < \frac{1}{\max\{0,-K\}}$,
the exponential map $\exp_\rho(\epsilon \nabla \phi)$ in the Wasserstein metric is given by the pushforward of the linear map $(I+\epsilon \nabla \phi)_\#$.
\end{lemma}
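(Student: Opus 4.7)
The plan is to exhibit the one-parameter family $\rho_s := (I + s\nabla\phi)_\# \rho$, $s \in [0,\epsilon]$, as a constant-speed Wasserstein geodesic starting at $\rho$ with initial velocity $\nabla \phi$; the conclusion $\exp_\rho(\epsilon \nabla \phi) = \rho_\epsilon$ then follows directly from the definition of the Riemannian exponential map. The key ingredient is Brenier's theorem together with McCann's displacement interpolation: if $T = \nabla\psi$ for a convex $\psi$, then $T$ is the optimal transport map from $\rho$ to $T_\#\rho$, and the curve $s \mapsto ((1-s)I + sT)_\# \rho$ is the unique Wasserstein geodesic between its endpoints, parameterized at constant speed.

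First I would write $I + s\nabla\phi = \nabla \psi_s$ with $\psi_s(x) := \tfrac{1}{2}\|x\|^2 + s\phi(x)$, so that $\nabla^2 \psi_s = I + s \nabla^2\phi \succeq (1+sK)I$ by the assumption $\nabla^2 \phi \succeq KI$. The step-size restriction $0 \le \epsilon < 1/\max\{0,-K\}$ yields $1 + sK > 0$ for every $s \in [0,\epsilon]$, so each $\psi_s$ is strictly convex on that interval; in particular $\psi_\epsilon$ is strictly convex. Rewriting $I + s\nabla\phi = (1 - s/\epsilon) I + (s/\epsilon)(I + \epsilon \nabla\phi)$ exhibits $\rho_s$ as precisely the McCann interpolation between $\rho$ and $\rho_\epsilon$ along the optimal transport map $\nabla\psi_\epsilon$. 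Invoking Brenier/McCann then identifies $s \mapsto \rho_s$ as the Wasserstein geodesic from $\rho$ to $\rho_\epsilon$.

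The last step is to read off the initial velocity. The curve $\rho_s$ satisfies the continuity equation $\partial_s \rho_s + \nabla \cdot (\rho_s V_s) = 0$, where the velocity field $V_s$ is obtained by pushing $\partial_s(I + s\nabla\phi) = \nabla \phi$ through the map $I + s\nabla\phi$; at $s = 0$ the tangent direction is therefore $R_0 = -\nabla \cdot (\rho \nabla \phi) \equiv \nabla \phi$ in the notation of $\S\ref{Sec:OptMeas}$. Since $s \mapsto \rho_s$ is a geodesic through $\rho$ with initial velocity $\nabla\phi$, the Riemannian exponential gives $\exp_\rho(\epsilon \nabla\phi) = \rho_\epsilon = (I + \epsilon \nabla\phi)_\# \rho$, as claimed. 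The one substantive check is the convexity of $\psi_s$ for all $s \in [0,\epsilon]$ (not only $s = \epsilon$), which guarantees the whole curve is a geodesic rather than merely a path between geodesic endpoints; the Hessian bound combined with the step-size restriction makes this immediate, and everything else is a direct application of Brenier's theorem.
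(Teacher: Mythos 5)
Your proof is correct and follows essentially the same route as the paper: both identify $I+\epsilon\nabla\phi$ as the gradient of the convex function $\tfrac{1}{2}\|\cdot\|^2+\epsilon\phi$ and invoke Brenier's theorem together with McCann's displacement interpolation to conclude that the pushforward realizes the Wasserstein geodesic, hence the exponential map. The only difference is that you spell out the intermediate-time convexity and the identification of the initial velocity, which the paper leaves implicit in its one-line argument.
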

\begin{proof}
The map 
$I + \epsilon \nabla \phi = \nabla \left(\frac{\|\cdot\|^2}{2} + \epsilon \phi\right)$ is the gradient of a convex function that pushes forward $\rho$ to $\exp_\rho(\epsilon \nabla \phi)$, so it must be the optimal transport map.
\end{proof}

\section{Details for $\S\ref{Sec:OptMeas}$: Optimization in the space of measures} 

\subsection{Expected value}
\label{App:ExpVal}

Let
$$F(\rho) = \E_\rho[f] = \int_{\R^n} \rho(x) f(x) \, dx$$
for some smooth function $f \colon \R^n \to \R$ in space.
If $f$ is minimized at $x^\ast(f) = \arg\min_{x \in \R^n} f(x)$ with minimum value $\min f$,
then $F$ is minimized at any measure $\rho$ supported on $x^\ast(f)$ with the same minimum value, $\min F = \min f$.
Essentially, the behavior of $F$ is the same as that of $f$.

\subsubsection{Gradient domination and strong convexity}

The gradient of $F(\rho) = \E_\rho[f]$ is $\grad_\rho F = -\nabla \cdot (\rho \nabla f) \equiv \nabla f$, so 
$\|\grad_\rho F\|^2 = \E_\rho[\|\nabla f\|^2]$.

The gradient-domination condition $\|\grad F\|^2 \ge 2\alpha (F-\min F)$, $\alpha > 0$, becomes
$$\E_\rho[\|\nabla f\|^2] \ge 2\alpha \, \E_\rho[f-\min f]~~~~\forall \, \rho \in \P.$$
Therefore, $F$ satisfies gradient domination in the space of measures if and only if $f$ satisfies the gradient-domination condition $\|\nabla f\|^2 \ge 2\alpha(f - \min f)$ in space.

Similarly, the Hessian of $F$ is given by $(\Hess_\rho F)(R,R) = \E_\rho[\langle \nabla \phi, (\nabla^2 f) \nabla \phi \rangle]$, for a tangent function $R \equiv \nabla \phi \in \T_\rho \P$.
Therefore,
$F$ is strongly convex in the space of measures if and only if $f$ is strongly convex in space.
In particular, if $f$ (and hence $F$) is strongly convex, then $f$ has a unique minimizer $x^\ast \in \R^n$, and $F$ has a unique minimizer $\delta_{x^\ast} \in \P$ which is the point mass at $x^\ast$.

\subsubsection{Gradient flow}
\label{Sec:GFF}

The gradient flow equation $\dot \rho = -\grad_\rho F$ is
$$\part{\rho}{t} = \nabla \cdot \left(\rho \nabla f \right).$$
This is  the continuity equation of the gradient flow equation of $f$ in space:
$$\dot x = -\nabla f(x).$$
Therefore, the gradient flow of $F(\rho) = \E_\rho[f]$ 
is implementable by the gradient flow of $f$ in space.
Furthermore, the rate of convergence for $F$ is the same as the rate of convergence for $f$.
If $f$ (and hence $F$) is gradient dominated, then the convergence is exponential in the function value, and also in the distance to the set of minimizers.
If $f$ (and hence $F$) is strongly convex, then any two coevolving solutions are contracting exponentially fast.
If $f$ (and hence $F$) is nonconvex, 
then the limiting measure is supported on the set of local minima, with weights equal to the measure of the corresponding basins of attraction under the initial measure.

\subsubsection{Gradient descent}
\label{Sec:GDF}

The forward method or gradient descent $\rho_{k+1} = \exp_{\rho_k}(-\epsilon \grad_{\rho_k} F)$ becomes
\begin{align}\label{Eq:GDMeas}
\rho_{k+1} = \exp_{\rho_k}(-\epsilon \nabla f)
\end{align}
where recall $\grad_\rho F \equiv \nabla f$.
Assume $f$ is $L$-smooth, which means $\nabla^2 f \preceq L I$.
Then for $\epsilon < \frac{1}{L}$, the gradient descent for $F$~\eqref{Eq:GDMeas} is implemented by gradient descent for $f$ in space:
$$\rho_{k+1} = (I - \epsilon \nabla f)_\# \rho_k.$$
Furthermore, if $f$ (and hence $F$) is $\alpha$-gradient-dominated for some $\alpha > 0$, then gradient descent converges exponentially fast in the function value:
$$F(\rho_k) - \min F \le (1-\alpha \epsilon (2-\epsilon L))^k (F(\rho_0)-\min F).$$
For example, for $\epsilon = \frac{1}{2L}$, the rate is $(1-\frac{3}{4}\frac{\alpha}{L})^k$.
This also implies an exponential convergence rate in the distance to minimizer.
Similarly, if $f$ (and hence $F$) is strongly convex, then any two solutions are contracting exponentially fast.

\subsubsection{Proximal gradient}
\label{Sec:PGF}

The backward method or proximal gradient algorithm for $F(\rho) = \E_\rho[f]$ is
$$\rho_{k+1} = \arg\min_{\rho \in \P} \left\{ \E_\rho[f] + \frac{1}{2\epsilon} W_2(\rho,\rho_k)^2 \right\}.$$
Equivalently, $\rho_{k+1}$ solves the adjoint version of the equation~\eqref{Eq:GDMeas}:
$$\exp_{\rho_{k+1}}(\epsilon \nabla f) = \rho_k.$$
This is implemented by the backward method or proximal gradient algorithm for $f$ in space:
$$\rho_{k+1} = \left((I+\epsilon \nabla f)^{-1}\right)_\# \rho_k.$$
See below for detail.
Here $(I+\epsilon \nabla f)^{-1}$ is the proximal gradient operator for $f$; i.e., $x_{k+1} = (I+\epsilon \nabla f)^{-1}(x_k)$ if and only if
$$x_{k+1} = \arg\min_{x \in \R^n} \left\{ f(x) + \frac{1}{2\epsilon} \|x-x_k\|^2 \right\}.$$
This is well-defined if $f$ satisfies $-LI \preceq \nabla^2 f \preceq LI$ for some $L > 0$, and $0 < \epsilon < \frac{1}{L}$.
Furthermore, if $f$ (and hence $F$) is $\alpha$-gradient-dominated for some $\alpha > 0$, then proximal gradient converges exponentially fast in the function value:
$$F(\rho_k) - \min F \le \frac{1}{(1+\alpha \epsilon)^k} (F(\rho_0)-\min F).$$
This also implies an exponential convergence rate in the distance to minimizer.
Similarly, if $f$ (and hence $F$) is strongly convex, then any two solutions are contracting exponentially fast.

\subsubsection{Detail for~$\S\ref{Sec:PGF}$: Proximal gradient for expected value}
\label{App:PGF}

Let $f \colon \R^n \to \R$ be a smooth function.

The {\em proximal step} for $f$ with step size $\epsilon > 0$ is the map $P_{\epsilon,f} \colon \R^n \to \R^n$ given by
$$P_{\epsilon,f}(x) = \arg\min_{y \in \R^n} \left\{ f(y) + \frac{1}{2\epsilon} \|y-x\|^2 \right\}.$$
Taking derivative and setting it to zero, the solution $y = P_{\epsilon,f}(x)$ satisfies
$\nabla f(y) + \frac{1}{\epsilon}(y-x) = 0$, or equivalently,
$y + \epsilon \nabla f(y) = x.$
Thus,
$$P_{\epsilon,f}(x) = (I + \epsilon \nabla f)^{-1}(x).$$
This is well-defined if $f$ is $L$-smooth, i.e., $-LI \preceq \nabla^2 f(x) \preceq LI$, and $0 < \epsilon < \frac{1}{L}$.

In the space of measure, this corresponds to the pushforward map
$$\widetilde P_{\epsilon,f}(\nu) = ((I+\epsilon \nabla f)^{-1})_\# \nu.$$
We will show this is also the proximal step in the space of measure of the corresponding expected value functional:
$$\widetilde P_{\epsilon,f}(\nu) = \arg\min_{\rho \in \P} \left\{ \E_\rho[f] + \frac{1}{2\epsilon} W(\rho,\nu)^2\right\}.$$

We calculate the derivative and set it to zero at stationary point. 
At the minimizer $\rho = \widetilde P_{\epsilon,f}(\nu)$, the gradient of $\widetilde F(\rho) = \E_\rho[f] + \frac{1}{2\epsilon} W(\rho,\nu)^2$ vanishes.
This means for any motion of $\rho$ given by $\part{\rho}{t} + \nabla \cdot (\rho \xi) = 0$, we have
$$0 = \left.\frac{d}{dt} \right|_{t=0} \widetilde F(\rho) = \int \rho(y) \left\langle \nabla f(y) + \frac{1}{\epsilon}(y-\nabla \varphi^\ast(y)), \, \xi(y) \right \rangle \, dy.$$
Here $\nabla \varphi$ is the optimal transport map that sends $\nu$ to $\rho$, so $I - \nabla \varphi^\ast$ is the optimal map that sends $\rho$ to $\nu$.
Since the integral above is zero for any $\xi$, we must have ($\rho$--almost surely)
$$y + \epsilon \nabla f(y) = \nabla \varphi^\ast(y).$$
Thus, 
$$\nabla \varphi^\ast = I + \epsilon \nabla f$$
so the optimal map that sends $\nu$ to $\rho$ is
$$\nabla \varphi = (I + \epsilon \nabla f)^{-1}$$
which is the proximal step map.
Therefore, the solution $\rho = \widetilde P_{\epsilon,f}(\nu)$ is given by
$$\rho = (\nabla \varphi)_\# \nu = ((I+\epsilon \nabla f)^{-1})_\# \nu$$
which is the same as the pushforward of the proximal map above, as desired.

\subsection{Negative entropy}
\label{App:Ent}

Suppose
$$F(\rho) = -H(\rho) = \int_{\R^n} \rho(x) \log \rho(x) \, dx$$
is the negative entropy.
It takes values in $\R \cup \{+\infty\}$.
The value $+\infty$ is achieved by a point mass, or any distribution supported on a set of Lebesgue measure zero.
Negative entropy is also unbounded below, with the limiting value $-\infty$ achieved by the Lebesgue measure (which is not a probability measure).
For example, for a Gaussian distribution $\rho = \N(\mu,\Sigma)$, the negative entropy is $F(\rho) = -\frac{1}{2} \log \det(2 \pi e \Sigma)$.

\subsubsection{The heat flow as gradient flow of negative entropy}
\label{Sec:Heat}

The gradient of $F(\rho) = -H(\rho)$ is $\grad_\rho F = -\nabla \cdot (\rho \nabla \log \rho) = -\Delta \rho$,
or $\grad_\rho F \equiv \nabla \log \rho$.
Therefore, the gradient flow equation $\dot \rho = -\grad_\rho F$ of negative entropy is the {\em heat equation}:
$$\part{\rho}{t} = \Delta \rho.$$
This has an exact solution which is the {\em heat flow}:
$$\rho_t = \rho_0 \ast \N(0,2t I).$$
In space, this is implemented via addition of independent Gaussian noise:
\begin{align}\label{Eq:XZ}
X_t = X_0 + \sqrt{2t} Z
\end{align}
where $Z \sim \N(0,I)$ is independent of $X_0$.\footnote{The true solution of the heat flow is the Brownian motion in space.
However, at each time, the solution has the same distribution as~\eqref{Eq:XZ} above.}
Note that $\rho_t$ tends to the Lebesgue measure as $t \to \infty$.
Note also that variance grows linearly along the heat flow.

For example, if $\rho_0 = \N(\mu,\Sigma)$, then $\rho_t = \N(\mu,\Sigma + 2tI)$.
In this case negative entropy decreases logarithmically:
$F(\rho_t) = -\frac{n}{2} \log (2\pi e) - \frac{1}{2} \sum_{i=1}^n \log (\lambda_i + 2t)$, where $\lambda_1,\dots,\lambda_n > 0$ are the eigenvalues of $\Sigma \succ 0$.
This logarithmic rate is also the fastest rate at which negative entropy decreases along the heat flow.
This follows from the fact that 
Gaussian minimizes negative entropy for a given covariance, so
$F(\rho_t) \ge F(\N(0,\Sigma+2tI))$ where $\Sigma$ is the covariance of $\rho_0$.

By the nonnegativity of mutual information, we also have a logarithmic upper bound on negative entropy.
Indeed, since $I(X_0;X_t) = H(X_t) - H(X_t \,|\, X_0) \ge 0$ and $H(X_t\,|\,X_0) = H(\N(0,2tI))$, we have $F(\rho_t) \le F(\N(0,2tI))$.
However, the upper bound is $+\infty$ at $t = 0$.
Using the entropy power inequality, 
we can improve this to $F(\rho_t) \le -\frac{n}{2} \log(e^{-\frac{2}{n} F(\rho_0)} + e^{-\frac{2}{n} F(\N(0,2tI))})$,
which has the same logarithmic behavior for large $t$, and takes the correct value $F(\rho_0)$ at $t = 0$.

\subsubsection{Fisher information and convexity of negative entropy}
\label{Sec:Fish}

The squared norm of the gradient of negative entropy $F = -H$ is the {\em Fisher information}:
$$J(\rho) = \E_\rho[\|\nabla \log \rho\|^2].$$
Therefore, the gradient flow identity $\frac{d}{dt} F(\rho) = -\|\grad_\rho F\|^2$ becomes $\frac{d}{dt} H(\rho) = J(\rho)$, which is known as De Bruijn's identity in information theory~\cite[]{Sta59}.

The Hessian of $F(\rho) = -H(\rho)$ is, on a tangent function $R = -\nabla \cdot(\rho \nabla \phi) \in \T_\rho \P$,
$$(\Hess_\rho F)(R,R) = \E_\rho[\|\nabla^2 \phi\|^2_{\HS}].$$
The expression above is nonnegative; therefore, negative entropy is a convex functional in the space of measures.
It is not strictly convex in general,
but it is strongly convex along geodesics that preserve the mean, with strong convexity parameter the Poincar\'e constant of the distribution; see~\cite[Corollary~2.5]{CG03}.
Note that the Hessian of negative entropy is not bounded above.

We also note that the gradient flow identity $\frac{d^2}{dt^2} F(\rho) = 2(\Hess F)(\grad F, \grad F)$ becomes $\frac{d^2}{dt^2} H(\rho) = -2K(\rho)$, where $K(\rho)$ is the {\em second-order Fisher information}:
$$K(\rho) = \E_\rho[\|\nabla^2 \log \rho\|^2_{\HS}].$$
This is used in the proof of the concavity of entropy power along the heat flow~\cite[]{Dembo91,Vil00}.
Moreover, since $\frac{d}{dt} H(\rho) = J(\rho) \ge 0$ and $\frac{d^2}{dt^2} H(\rho) = -2K(\rho) \le 0$,
entropy is increasing and concave along the heat flow;
this is also a consequence of the general fact that a convex function is decreasing in a convex manner along its own gradient flow.

\subsubsection{Forward method for heat flow}
\label{Sec:FwHeat}

The forward method $\rho_{k+1} = \exp_{\rho_k}(-\epsilon \grad_{\rho_k} F)$ for negative entropy $F = -H$ is
$$\rho_{k+1} = \exp_{\rho_k}(-\epsilon \nabla \log \rho_k)$$
where recall $\grad_\rho F \equiv \nabla \log \rho$.
If $\rho_k$ is $K$-log-semiconcave, which means $-\nabla^2 \log \rho_k \succeq KI$ for some $K \in \R$, then for $\epsilon \le \frac{1}{\max\{0,-K\}}$, the forward method above is implemented by:
\begin{align}\label{Eq:FwHeat}
\rho_{k+1} = (I - \epsilon \nabla \log \rho_k)_\# \rho_k.
\end{align}
If we know the analytic form of $\nabla \log \rho_k$, then we can run one step of the algorithm above by $x_{k+1} = x_k - \epsilon \nabla \log \rho_k(x_k)$.
However, in the next step we do not know the analytic form of $\nabla \log \rho_{k+1}$, so we cannot continue running it.

For Gaussian initial data, we can solve the forward method.
In this case the distributions stay Gaussian, but the variance grows faster than in the heat flow.

\begin{example}[Forward method for heat flow with Gaussian data.]
Let $\rho_0 = \N(\mu_0,\Sigma_0)$, $\epsilon > 0$.
Along the forward method~\eqref{Eq:FwHeat} for the heat flow, $\rho_k = \N(\mu_k,\Sigma_k)$ stays Gaussian.
Furthermore, the algorithm~\eqref{Eq:FwHeat} becomes the iteration
$x_{k+1} = (I + \epsilon \Sigma_k^{-1}) x_k - \epsilon \Sigma_k^{-1} \mu_k.$
Since $x_k \sim \N(\mu_k,\Sigma_k)$, $x_{k+1} \sim \N(\mu_k, \Sigma_k(I + \epsilon \Sigma_k^{-1})^2).$
Therefore, $\mu_{k+1} = \mu_k = \mu_0$ and $\Sigma_{k+1} = \Sigma_k(I + \epsilon \Sigma_k^{-1})^2$. 
Note that $\Sigma_k \succ \Sigma_{k-1} + 2\epsilon I \succ \Sigma_0 + 2\epsilon kI$, so 
variance grows faster than in the heat flow (with $t = \epsilon k$).
\end{example}

\subsubsection{Backward method for heat flow}
\label{Sec:BwHeat}

The backward method $\rho_{k+1} = \arg\min_{\rho \in \P} \{F(\rho) + \frac{1}{2\epsilon} W_2(\rho,\rho_k)^2\}$ for negative entropy  $F = -H$ is
$$\exp_{\rho_{k+1}}(\epsilon \nabla \log \rho_{k+1}) = \rho_k.$$
If $\rho_{k+1}$ is $L$-log-smooth, which means $-\nabla^2 \log \rho_{k+1} \preceq LI$ for some $L > 0$, then for $\epsilon \le \frac{1}{L}$, the backward method above is implemented (implicitly) by:
\begin{align}\label{Eq:BwHeat}
(I + \epsilon \nabla \log \rho_{k+1})_\# \rho_{k+1} = \rho_k.
\end{align}
In general this is not solvable analytically.
For Gaussian initial data, we can solve the backward method.
In this case the distributions stay Gaussian, but the variance grows slower than in the heat flow (and in contrast to the forward method), see also~\cite[Remark, $\S3$]{CG03}.

\begin{example}[Backward method for heat flow with Gaussian data.]
Let $\rho_0 = \N(\mu_0,\Sigma_0)$ and $0 < \epsilon \le \lambda_{\min}(\Sigma_0)$.
Along the backward method~\eqref{Eq:BwHeat} for the heat flow, $\rho_k = \N(\mu_k,\Sigma_k)$ stays Gaussian.
Furthermore, the algorithm~\eqref{Eq:BwHeat} becomes the iteration
$(I - \epsilon \Sigma_{k+1}^{-1}) x_{k+1} + \epsilon \Sigma_{k+1}^{-1} \mu_{k+1} = x_k$.
Since $x_{k+1} \sim \N(\mu_{k+1},\Sigma_{k+1})$, $x_k \sim \N(\mu_{k+1}, \Sigma_{k+1}(I - \epsilon \Sigma_{k+1}^{-1})^2).$
Therefore, $\mu_{k+1} = \mu_k = \mu_0$ and $\Sigma_{k+1}(I - \epsilon \Sigma_{k+1}^{-1})^2 = \Sigma_k$.
Equivalently, $\Sigma_{k+1} = \frac{1}{2}(\Sigma_k+2\epsilon I + \{\Sigma_k(\Sigma_k+4\epsilon I)\}^{\frac{1}{2}})
= \Sigma_k + 2\epsilon I - \epsilon^2 \Sigma_k^{-1} + O(\epsilon^3)$.
In particular, $\Sigma_k \prec \Sigma_{k-1} + 2\epsilon I \prec \Sigma_0 + 2\epsilon k I$, so 
variance grows slower than in the heat flow. 
\end{example}

\subsection{Gradient flow of variance}
\label{App:Var}

The variance of a probability density $\rho$ on $\R^n$ can be written as
$$\Var(\rho) = \frac{1}{2} \int_{\R^n \times \R^n} \rho(x) \rho(y) \|x-y\|^2 \, dx \, dy.$$
This is an example of an ``interaction energy'' of~\cite[$\S5.2.2$]{Vil03}, which in general is of the form $F(\rho) = \frac{1}{2} \int_{\R^n \times \R^n} \rho(x) \rho(y) W(x-y) \, dx \, dy$ where $W \colon \R^n \to \R$ is a symmetric convex function.
Variance is the case when $W(x) = \|x\|^2$.

In general, the gradient of the interaction energy is $\grad_\rho F = -\nabla \cdot(\rho \nabla (\rho \ast W)) \equiv \nabla (\rho \ast W)$, where $\ast$ is the convolution.
For variance with $W(x) = \|x\|^2$, we have $(\rho \ast W)(x) = \int_{\R^n} \rho(y) \|x-y\|^2 \, dy$. 
Then $\nabla (\rho \ast W)(x) = 2 \int_{\R^n} \rho(y) (x-y) \, dy = 2(x-\mu(\rho))$ where $\mu(\rho) = \E_\rho[X]$ is the mean of $\rho$.
Therefore, the gradient of variance is
$\grad_\rho \Var = -\nabla \cdot (\rho \nabla (\rho \ast W)) = -2\nabla \cdot (\rho (x-\mu(\rho)))$.
Notice that
$$\|\grad_\rho \Var\|^2 = \int \rho \|\nabla (\rho \ast W)\|^2 = 4 \int \rho \|x-\mu(\rho)\|^2 = 4\Var(\rho).$$
Therefore, along the gradient flow of variance, $\frac{d}{dt} \Var(\rho) = -\|\grad_\rho \Var\|^2 = -4\Var(\rho)$, which implies $\Var(\rho_t) = e^{-4t} \Var(\rho_0)$.

The gradient flow of variance $\dot \rho = -\grad_\rho \Var$ is
$$\part{\rho}{t} = 2\nabla \cdot (\rho (x-\mu(\rho))).$$
Observe that the mean is preserved:
$$\frac{d}{dt} \mu(\rho) = \int_{\R^n} \part{\rho}{t} x = 2 \int_{\R^n} \nabla \cdot (\rho (x-\mu(\rho))) x = -2\int_{\R^n} \rho (x-\mu(\rho)) = -2(\mu(\rho)-\mu(\rho)) = 0$$
where in the calculation above we have used integration by parts.
Therefore, $\mu(\rho) = \mu(\rho_0) = \mu_0$ is fixed along the gradient flow of variance.
Thus, the gradient flow of variance becomes
$$\part{\rho}{t} = 2\nabla \cdot (\rho (x-\mu_0)).$$
This has an exact solution
$$\rho(t,x) = e^{2nt} \rho_0(e^{2t} x + (I-e^{2t})\mu_0).$$
Furthermore, this is implemented in space by shrinking around the mean:
$$x(t) = \mu_0 + e^{-2t}(x-\mu_0).$$

\section{A review of composite optimization}
\label{App:Comp}

Let $\M$ be an $n$-dimensional smooth Riemannian manifold.
We consider the composite optimization problem
$$\min_{x \in \M} f(x) + g(x)$$
where $f,g \colon \M \to \R$ are smooth functions.
One way to solve this optimization problem is to apply basic algorithms---gradient flow, gradient descent, or proximal gradient---to the objective function $f + g$.
However, sometimes we can compute these algorithms on $f$ and $g$ individually, but not on their sum; an example is the Langevin dynamics in discrete time.

One algorithm for solving this optimization problem is the {\em Forward-Backward} (FB) algorithm, which alternates between a forward step (gradient descent) for $f$ and a backward step (proximal gradient) for $g$.
The iteration can be written as a composition of two steps, with a step size $\epsilon > 0$:
\begin{subequations}\label{Eq:FB}
\begin{align}
x_{k+\frac{1}{2}} &= \exp_{x_k}(-\epsilon \, \grad_{x_k} f) \label{Eq:FBa} \\ 
x_{k+1} &= \arg\min_{x\in\M} \left\{ g(x) + \frac{1}{2\epsilon} d(x,x_{k+\frac{1}{2}})^2\right\}. \label{Eq:FBb}
\end{align}
\end{subequations}
Recall that the optimality condition for the second equation~\eqref{Eq:FBb} is
$\exp_{x_{k+1}}(\epsilon \, \grad_{x_{k+1}} g ) = x_{k+\frac{1}{2}}.$
Combining this with~\eqref{Eq:FBa}, we can write the FB algorithm as defined implicitly by the identity
\begin{align}\label{Eq:FBid}
\exp_{x_{k+1}}(\epsilon \, \grad_{x_{k+1}} g ) = \exp_{x_k}(-\epsilon \, \grad_{x_k} f).
\end{align}

\subsection{Why does Forward-Backward work?}
\label{App:WhyFB}

The minimizer (or any stationary point) of $f+g$ is a fixed point of the forward-backward algorithm.
This is because the backward step (proximal gradient) is the inverse of the negative forward step (gradient descent).

Concretely, let $x^\ast \in \M$ be a stationary point of $f+g$, so $\grad_{x^\ast} f + \grad_{x^\ast} g = 0$.
Suppose we start at $x_k = x^\ast$.
Then in the first half-step we move to $x_{k+\frac{1}{2}} = \exp_{x^\ast}(-\epsilon \grad_{x^\ast} f)$, which in general is different from $x^\ast$ (because $x^\ast$ may not be a stationary point of $f$, so $\grad_{x^\ast} f \neq 0$).
However, the next half-step brings us back to $x_{k+1} = x^\ast$; this is because $x_{k+1}$ by definition satisfies the consistency equation 
$$\exp_{x_{k+1}}(\epsilon \, \grad_{x_{k+1}} g) = x_{k+\frac{1}{2}} = \exp_{x^\ast}(-\epsilon \, \grad_{x^\ast} f)$$
and we see that $x_{k+1} = x^\ast$ is a solution since $\grad_{x^\ast} g = -\grad_{x^\ast} f$.

This pairing between methods which are inverses of each other is important to make the algorithm converge to the true minimizer of the composite function.
By symmetry, we can also use the backward and forward method, which will also converge to the correct minimizer.
However, if we choose any other pairing, then the algorithm will have a bias, i.e., it will converge to a point that is different from the true minimizer.

\subsubsection{Example: Quadratic function in $\R$}

Let $\M = \R$ and $f(x) = \frac{1}{2} (x-a)^2$, for some $a \in \R$.
The basic algorithms with step size $\epsilon > 0$ are:

\renewcommand{\arraystretch}{1.1}
\begin{table}[h!]
\begin{center}
    \begin{tabular}[h!t]{c | l}
    Algorithm & ~~~~~~~~~~~~ Iteration \\
    \hline
    Gradient descent (GD) & ~ $x_{k+1} = (1-\epsilon) x_k + \epsilon a$ \\
    Gradient flow (GF) & ~ $x_{k+1} = e^{-\epsilon} x_k + (1-e^{-\epsilon}) a$ \\
    Proximal gradient (PG) ~ & ~ $x_{k+1} = \frac{1}{1+\epsilon} x_k + \frac{\epsilon}{1+\epsilon} a$
    \end{tabular}
\caption{\footnotesize Basic algorithms applied to $f(x) = \frac{1}{2}(x-a)^2$ in $\R$.}
\label{Tab:quad}
\end{center}
\end{table}

\noindent
Observe that $1-\epsilon \le e^{-\epsilon} \le \frac{1}{1+\epsilon}$, so in the quadratic case we see that PG is faster than GF which in turn is faster than GD.

Now consider the composite optimization problem $\min_{x \in \R} f(x) + g(x)$ where
$$f(x) = \frac{1}{2}(x-1)^2 ~~~~~~ \text{ and } ~~~~~~ g(x) = \frac{1}{2} (x+1)^2.$$
Then $f(x)+g(x) = x^2 + 1$, which is minimized at $x^\ast = 0$; but note that $x^\ast = 0$ is not a stationary point of $f$ or $g$.

We consider composite algorithms to solve the composite optimization problem in which we alternately apply a basic algorithm to each of $f$ and $g$:
\begin{align*}
x_{k+\frac{1}{2}} &= \textrm{A}_{\epsilon,f}(x_k) \\
x_{k+1} &= \textrm{A}'_{\epsilon,g}(x_{k+\frac{1}{2}})
\end{align*}
for some $\textrm{A}, \textrm{A}' \in \{\textrm{GD}, \textrm{GF}, \textrm{PG}\}$.
For each combination, we use Table~\ref{Tab:quad} to compute the exact iteration from $x_k$ to $x_{k+1}$ and determine the limit point.
If the limit point is not $x^\ast = 0$, then the composite algorithm is biased; else, it is unbiased.
The results are in Table~\ref{Tab:quadcomp}.
Note that all are biased except for two: the Forward-Backward algorithm, and the Backward-Forward algorithm.

\renewcommand{\arraystretch}{1.5}
\begin{table}[h!]
\begin{center}
    \begin{tabular}[h!t]{c | c | l | c }
    Alg.~for $f$ & ~ Alg.~for $g$ & ~~~~~~~~~~ Iteration & ~ Limit \\
    \hline
    GD & GD & ~ $x_{k+1} = (1-\epsilon)^2 x_k - \epsilon^2$ ~ & $-\frac{\epsilon}{2-\epsilon}$ \\
     & GF & ~ $x_{k+1} = e^{-\epsilon}(1-\epsilon) x_k + e^{-\epsilon}(1+\epsilon)-1$ ~ & $\frac{e^{-\epsilon}(1+\epsilon)-1}{1-e^{-\epsilon}(1-\epsilon)}$   \\
     & PG & ~ $x_{k+1} = \frac{1-\epsilon}{1+\epsilon} x_k$ & 0  \\
     \hline
    GF & GD & ~ $x_{k+1} = e^{-\epsilon}(1-\epsilon) x_k + (1-e^{-\epsilon})(1-\epsilon)-\epsilon$  & $\frac{(1-e^{-\epsilon})(1-\epsilon)-\epsilon}{1-e^{-\epsilon}(1-\epsilon)}$   \\
     & GF & ~ $x_{k+1} = e^{-2\epsilon} x_k - (1-e^{-\epsilon})^2$ ~ & $-\frac{(1-e^{-\epsilon})^2}{1-e^{-2\epsilon}}$  \\
     & PG & ~ $x_{k+1} = \frac{e^{-\epsilon}}{1+\epsilon} x_k + \frac{1-e^{-\epsilon}-\epsilon}{1+\epsilon}$ ~ & $\frac{1-e^{-\epsilon}-\epsilon}{1-e^{-\epsilon}+\epsilon}$  \\
     \hline
    PG & GD & ~ $x_{k+1} = \frac{1-\epsilon}{1+\epsilon} x_k$ & 0  \\
     & GF & ~ $x_{k+1} = \frac{e^{-\epsilon}}{1+\epsilon} x_k + \frac{e^{-\epsilon}\epsilon}{1+\epsilon}-1 + e^{-\epsilon}$ & $\frac{e^{-\epsilon}\epsilon-(1-e^{-\epsilon})(1+\epsilon)}{1+\epsilon-e^{-\epsilon}}$  \\
     & PG & ~ $x_{k+1} = \frac{1}{(1+\epsilon)^2} x_k + \frac{\epsilon^2}{(1+\epsilon)^2}$ & $\frac{\epsilon}{2-\epsilon}$  \\
    \end{tabular}
\caption{\footnotesize Composite algorithms applied to $f(x) = \frac{1}{2}(x-1)^2$ and $g(x) = \frac{1}{2}(x+1)^2$ in $\R$.}
\label{Tab:quadcomp}
\end{center}
\end{table}

\subsection{Convergence rate of FB under gradient domination in Euclidean space}
\label{App:FBRn}

We review the exponential convergence rate of the Forward-Backward (FB) algorithm under gradient domination condition and partial smoothness assumptions.
In this section we are working in the Euclidean case $M = \R^n$.
The following is adapted from~\cite[Theorem~4.2.b.ii]{GRV17}.

\begin{lemma}\label{Lem:FBRn}
Let $f \colon \R^n \to \R$ be $K$-semiconvex and $L$-smooth ($KI \preceq \nabla^2 f \preceq LI$) for some $K \in \R$, $L > \max\{0,-K\}$, and let $g \colon \R^n \to \R$ be convex ($\nabla^2 g \succeq 0$).
Assume $\tilde f = f+g$ is $\alpha$-gradient dominated for some $\alpha > 0$.
Consider the FB iteration
\begin{align*}
x_{k+\frac{1}{2}} &= x_k - \epsilon \nabla f(x_k) \\
x_{k+1} &= \arg\min_{x \in \R^n} \left\{ g(x) + \frac{1}{2\epsilon} \|x-x_{k+\frac{1}{2}}\|^2\right\}.
\end{align*}
Then for $0 < \epsilon \le \min\{\frac{2}{L},\frac{2}{K+L}\}$,
$$\tilde f(x_k) - \min \tilde f \le \left(1 + \frac{\alpha \epsilon(2-\epsilon L)}{1-2\epsilon\frac{KL}{K+L}} \right)^{-k} (\tilde f(x_0) - \min \tilde f).$$
\end{lemma}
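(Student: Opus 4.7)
The plan is to establish a per-iteration contraction of the form $\tilde f(x_{k+1}) - \min \tilde f \le (1+c)^{-1}(\tilde f(x_k) - \min \tilde f)$ with $c = \alpha \epsilon (2-\epsilon L)/(1 - 2\epsilon KL/(K+L))$, and then iterate. The argument has three ingredients: (i) a one-step descent bound on $\tilde f$ obtained by combining $L$-smoothness of $f$ with convexity of $g$; (ii) a bound relating the ``composite gradient'' $v := \nabla f(x_k) + \nabla g(x_{k+1})$ produced by the FB step to the true gradient $\nabla \tilde f(x_{k+1})$; and (iii) gradient domination applied at $x_{k+1}$.

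For (i), I would apply the $L$-smoothness quadratic upper bound for $f$ to the pair $(x_k,x_{k+1})$ together with the subgradient inequality for the convex function $g$ at $x_{k+1}$:
\begin{align*}
f(x_{k+1}) &\le f(x_k) + \langle \nabla f(x_k), x_{k+1} - x_k \rangle + \tfrac{L}{2}\|x_{k+1}-x_k\|^2, \\
g(x_{k+1}) &\le g(x_k) + \langle \nabla g(x_{k+1}), x_{k+1}-x_k \rangle.
\end{align*}
The optimality condition of the proximal step gives $x_{k+1} - x_k = -\epsilon v$. Summing the two inequalities yields the clean descent $\tilde f(x_{k+1}) - \tilde f(x_k) \le -\epsilon(1-\epsilon L/2)\|v\|^2$, which is nonpositive for $\epsilon \le 2/L$.

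For (ii), I write $\nabla \tilde f(x_{k+1}) = v + (\nabla f(x_{k+1}) - \nabla f(x_k))$, expand $\|\nabla \tilde f(x_{k+1})\|^2$, and bound the cross term $-(2/\epsilon)\langle x_{k+1}-x_k, \nabla f(x_{k+1})-\nabla f(x_k)\rangle$ using a semiconvex generalization of the Nesterov interpolation inequality~\eqref{Eq:Nest}. Specifically, for $K$-semiconvex $L$-smooth $f$ with $K+L>0$, the inequality $\langle \nabla f(x) - \nabla f(y), x-y\rangle \ge \frac{KL}{K+L}\|x-y\|^2 + \frac{1}{K+L}\|\nabla f(x) - \nabla f(y)\|^2$ holds, which I would derive by applying the convex, $(L-K)$-smooth co-coercivity to $h(x) := f(x) - \tfrac{K}{2}\|x\|^2$. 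Substituting into the expansion, the squared difference $\|\nabla f(x_{k+1})-\nabla f(x_k)\|^2$ appears with coefficient $1 - 2/(\epsilon(K+L))$, which is nonpositive precisely when $\epsilon \le 2/(K+L)$; dropping that nonpositive term yields $\|\nabla \tilde f(x_{k+1})\|^2 \le (1 - 2\epsilon KL/(K+L))\|v\|^2$, equivalently $\|v\|^2 \ge \|\nabla \tilde f(x_{k+1})\|^2/(1 - 2\epsilon KL/(K+L))$.

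Chaining the bounds from (i) and (ii) and applying $\alpha$-gradient domination $\|\nabla \tilde f(x_{k+1})\|^2 \ge 2\alpha(\tilde f(x_{k+1}) - \min \tilde f)$ gives $\tilde f(x_k) - \tilde f(x_{k+1}) \ge c(\tilde f(x_{k+1}) - \min \tilde f)$ with $c$ as stated, which rearranges to the one-step contraction and then iterates to the lemma's bound. The main obstacle is step (ii): securing the correct $K$-semiconvex form of~\eqref{Eq:Nest} and tracking the $\epsilon$-dependent coefficients so that the denominator $1 - 2\epsilon KL/(K+L)$ appears exactly as advertised, and in particular that the stepsize condition $\epsilon \le \min\{2/L, 2/(K+L)\}$ is precisely what makes both coefficients in the analysis fall on the right sign. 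Steps (i) and (iii) are standard.
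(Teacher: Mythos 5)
Your proposal is correct and follows essentially the same route as the paper: the same descent step combining $L$-smoothness of $f$ with convexity of $g$ and the prox optimality identity, the same semiconvex interpolation inequality (derived exactly as in the paper via $h = f - \tfrac{K}{2}\|\cdot\|^2$) to convert $\|v\|^2$ into $\|\nabla \tilde f(x_{k+1})\|^2$ with the factor $(1-2\epsilon\tfrac{KL}{K+L})^{-1}$, and the same use of gradient domination and unrolling. The only cosmetic difference is that you expand $\|\nabla\tilde f(x_{k+1})\|^2 = \|v + (\nabla f(x_{k+1})-\nabla f(x_k))\|^2$ directly, whereas the paper packages the identical computation as a contraction lemma applied to the points $x_{k+\frac12}, x_{k+\frac32}$ together with the identity $x_{k+\frac32}-x_{k+\frac12} = -\epsilon\nabla\tilde f(x_{k+1})$.
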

\begin{proof}
By the $L$-smoothness of $f$ and the convexity of $g$,
\begin{align*}
f(x_{k+1}) &\le f(x_k) + \langle \nabla f(x_k), x_{k+1}-x_k \rangle + \frac{L}{2} \|x_{k+1}-x_k\|^2 \\
g(x_{k+1}) &\le g(x_k) - \langle \nabla g(x_{k+1}), x_k-x_{k+1} \rangle.
\end{align*}
Adding the two inequalities above yields
$$\tilde f(x_{k+1}) \le \tilde f(x_k) + \langle \nabla f(x_k) + \nabla g(x_{k+1}), x_{k+1}-x_k \rangle + \frac{L}{2} \|x_{k+1}-x_k\|^2.$$
Since $x_k-\epsilon\nabla f(x_k) = x_{k+\frac{1}{2}} = x_{k+1} + \epsilon \nabla g(x_{k+1})$, we have $\nabla f(x_k) + \nabla g(x_{k+1}) = \frac{1}{\epsilon}(x_k-x_{k+1})$.
Note that here the linearity $\R^n$ is essential.
Substituting this equality to the inequality above yields
$$\tilde f(x_{k+1}) \le \tilde f(x_k) - \left(\frac{1}{\epsilon}-\frac{L}{2}\right) \|x_{k+1}-x_k\|^2.$$
Since $f$ is $K$-semiconvex and $L$-smooth, we have $\|x_{k+1}-x_k\|^2 \ge (1-2\epsilon\frac{KL}{K+L})^{-1} \|x_{k+\frac{3}{2}}-x_{k+\frac{1}{2}}\|^2$
by Lemma~\ref{Lem:SemiConvex} below, so
$$\tilde f(x_{k+1}) \le \tilde f(x_k) - \frac{1}{\left(1-2\epsilon\frac{KL}{K+L}\right)} \left(\frac{1}{\epsilon}-\frac{L}{2}\right) \|x_{k+\frac{3}{2}}-x_{k+\frac{1}{2}}\|^2.$$
Furthermore, 
$x_{k+\frac{3}{2}}-x_{k+\frac{1}{2}} = \left( x_{k+1}-\epsilon \nabla f(x_{k+1}) \right) - \left( x_{k+1}+\epsilon \nabla g(x_{k+1}) \right)
= -\epsilon (\nabla f(x_{k+1}) + \nabla g(x_{k+1})) = -\epsilon \nabla \tilde f(x_{k+1}).$
Note that here the linearity of $\R^n$ is also essential.
Therefore,
$$\tilde f(x_{k+1}) \le \tilde f(x_k) - \frac{\epsilon^2}{\left(1-2\epsilon\frac{KL}{K+L}\right)} \left(\frac{1}{\epsilon}-\frac{L}{2}\right) \|\nabla \tilde f(x_{k+1})\|^2.$$
Now using the $\alpha$-gradient domination assumption on $\tilde f$, we get
$$\tilde f(x_{k+1}) \le \tilde f(x_k) - \frac{2\alpha\epsilon^2}{\left(1-2\epsilon\frac{KL}{K+L}\right)} \left(\frac{1}{\epsilon}-\frac{L}{2}\right) (\tilde f(x_{k+1})-\min \tilde f).$$
Collecting terms and unrolling the recursion give
$$\tilde f(x_k) - \min \tilde f \le \left(1 + \frac{\alpha \epsilon(2-\epsilon L)}{1-2\epsilon\frac{KL}{K+L}} \right)^{-k} (\tilde f(x_0) - \min \tilde f)$$
as desired.
\end{proof}

The proof of Lemma~\ref{Lem:FBRn} uses the following lemma.

\begin{lemma}\label{Lem:SemiConvex}
Let $f \colon \R^n \to \R$ be $K$-semiconvex and $L$-smooth ($KI \preceq \nabla^2 f \preceq LI$) for some $K \in \R$, $L > \max\{0,-K\}$.
Let $x,y \in \R^n$, and for $\epsilon \ge 0$, let $x(\epsilon) = x - \epsilon \nabla f(x)$ and $y(\epsilon) = y-\epsilon \nabla f(y)$.
Then for $0 \le \epsilon \le \frac{2}{K+L}$,
$$\|x(\epsilon)-y(\epsilon)\|^2 \le \left(1-2\epsilon\frac{KL}{K+L}\right) \|x-y\|^2.$$ 
\end{lemma}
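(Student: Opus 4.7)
The plan is to follow exactly the template used in the proof of Lemma~\ref{Lem:ULASynch}. Expand $\|x(\epsilon)-y(\epsilon)\|^2$ as a quadratic in $\epsilon$, apply a Nesterov-type co-coercivity inequality, and check that the leftover term has the right sign when $\epsilon \le \frac{2}{K+L}$.

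The first step, and really the only nontrivial one, is to extend the Nesterov inequality~\eqref{Eq:Nest} from the strongly convex setting ($\alpha > 0$) to the semi-convex setting ($K \in \R$ with $K+L > 0$). Concretely, I would establish
\begin{align*}
\langle \nabla f(x) - \nabla f(y), x-y \rangle \ge \frac{KL}{K+L}\|x-y\|^2 + \frac{1}{K+L}\|\nabla f(x)-\nabla f(y)\|^2.
\end{align*}
The standard trick is a convex shift: set $\tilde f(z) = f(z) - \frac{K}{2}\|z\|^2$, so $0 \preceq \nabla^2 \tilde f \preceq (L-K)I$, i.e.\ $\tilde f$ is convex and $(L-K)$-smooth. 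The classical co-coercivity for convex $(L-K)$-smooth functions gives $\langle \nabla \tilde f(x) - \nabla \tilde f(y), x-y\rangle \ge \frac{1}{L-K}\|\nabla \tilde f(x) - \nabla \tilde f(y)\|^2$. Substituting $\nabla \tilde f(z) = \nabla f(z) - Kz$, writing $u = \nabla f(x)-\nabla f(y)$ and $v = x-y$, expanding $\|u-Kv\|^2$, collecting $\langle u,v\rangle$, and dividing by $\frac{L+K}{L-K} > 0$ yields the displayed inequality.

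The second step is the same computation as in Appendix~\ref{App:ULASynch}:
\begin{align*}
\|x(\epsilon)-y(\epsilon)\|^2 = \|x-y\|^2 - 2\epsilon\,\langle \nabla f(x)-\nabla f(y),\, x-y\rangle + \epsilon^2\|\nabla f(x)-\nabla f(y)\|^2.
\end{align*}
Substituting the generalized Nesterov bound from step one and collecting terms gives
\begin{align*}
\|x(\epsilon)-y(\epsilon)\|^2 \le \left(1-\frac{2\epsilon KL}{K+L}\right)\|x-y\|^2 + \epsilon\left(\epsilon - \frac{2}{K+L}\right)\|\nabla f(x)-\nabla f(y)\|^2.
\end{align*}
For $0 \le \epsilon \le \frac{2}{K+L}$ the second term is nonpositive and may be dropped, yielding the claim.

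The main obstacle is really the first step: one must be a little careful because $K$ may be negative, so the strongly convex version~\eqref{Eq:Nest} cited earlier does not apply directly, and one has to verify that the convex-shift reduction goes through and that the required division by $L+K$ is legitimate (which it is, by hypothesis $L > \max\{0,-K\}$). Once the generalized co-coercivity is in hand, the rest is a line-for-line copy of the ULA synchronous coupling argument and involves no new ideas.
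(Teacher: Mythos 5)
Your proposal is correct and matches the paper's own argument essentially line for line: the convex shift $f(z)-\frac{K}{2}\|z\|^2$ to obtain the generalized co-coercivity inequality with constants $\frac{KL}{K+L}$ and $\frac{1}{K+L}$, followed by the same quadratic expansion and dropping of the nonpositive $\epsilon(\epsilon-\frac{2}{K+L})$ term. The only detail the paper adds is to dispose of the degenerate case $K=L$ separately (where the $\frac{1}{L-K}$ co-coercivity bound is undefined), which is trivial and does not affect the substance of your argument.
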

\begin{proof}
Since $f$ is $K$-semiconvex and $L$-smooth, the function $\phi(x) = f(x) - \frac{K}{2}\|x\|^2$ is convex and $(L-K)$-smooth.
If $K = L$, then we are done.
Else, by a standard property of smooth convex function~\cite[Theorem~2.1.5]{Nesterov04} we have
$\langle \nabla \phi(x) - \nabla \phi(y), x-y \rangle \ge \frac{1}{L-K} \|\nabla \phi(x)-\nabla \phi(y)\|^2$.
Since $\nabla \phi(x) = \nabla f(x) - Kx$, this is equivalent to
$$\langle \nabla f(x)-\nabla f(y),x-y\rangle \ge \frac{KL}{K+L}\|x-y\|^2 + \frac{1}{K+L} \|\nabla f(x)-\nabla f(y)\|^2.$$
Note this is the same as the bound for a smooth and strongly convex function~\cite[Theorem~2.1.12]{Nesterov04}, but here $K$ may be negative (but not too negative, as $K+L>0$).
Then:
\begin{align*}
\|x(\epsilon)-y(\epsilon)\|^2 &= \|x-y-\epsilon(\nabla f(x)-\nabla f(y))\|^2 \\
&= \|x-y\|^2 - 2\epsilon \langle \nabla f(x)-\nabla f(y),x-y\rangle + \epsilon^2 \|\nabla f(x)-\nabla f(y)\|^2 \\
&\le \left(1-2\epsilon\frac{KL}{K+L}\right) \|x-y\|^2 - \epsilon \left(\frac{2}{K+L} - \epsilon\right)\|\nabla f(x)-\nabla f(y)\|^2.
\end{align*}
If $0 \le \epsilon \le \frac{2}{K+L}$, then the last term on the right hand side above is nonpositive, so we may drop it to get the desired result
$\|x(\epsilon)-y(\epsilon)\|^2 \le \left(1-2\epsilon\frac{KL}{K+L}\right) \|x-y\|^2.$
\end{proof}

\section{Details for $\S\ref{Sec:Comp}$}

\subsection{Forward-Backward for Langevin dynamics}
\label{App:FBLang}

\begin{example}[FB for OU with Gaussian data.]\label{Ex:OUFB}
Let $\nu = \N(\mu,\Sigma)$ as in Example~\ref{Ex:OU}, and $\epsilon \le \lambda_{\min}(\Sigma)$.
Let $\rho_0 = \N(\mu_0,\Sigma_0)$ and let $\Sigma_0 = I$ for simplicity, so $\Sigma_0$ commutes with $\Sigma$.
Along FB~\eqref{Eq:FBLang} for OU, $\rho_k = \N(\mu_k,\Sigma_k)$ stays Gaussian, and $\Sigma_k$ commutes with $\Sigma$.
Furthermore,~\eqref{Eq:FBLang} becomes: 
\begin{align*}
x_{k+\frac{1}{2}} &= \mu + (I-\epsilon \Sigma^{-1}) (x_k-\mu) \\
x_{k+1} &= \mu_{k+1} + (I - \epsilon \Sigma_{k+1}^{-1})^{-1}(x_{k+\frac{1}{2}}-\mu_{k+1}).
\end{align*}
Since $x_k \sim \N(\mu_k,\Sigma_k)$, this yields a system of updates (the second one is implicit):
\begin{align*}
\mu_{k+1} &= \mu + (I-\epsilon \Sigma^{-1})(\mu_k-\mu) \\
\Sigma_{k+1} (I-\epsilon \Sigma_{k+1}^{-1})^2 &= \Sigma_k(I-\epsilon \Sigma^{-1})^2.
\end{align*}
The mean converges exponentially fast to the correct mean:
$\mu_k = \mu + (I-\epsilon \Sigma^{-1})^k (\mu_0-\mu) \to \mu.$
For the covariance, note that $\Sigma_k = \Sigma_{k+1} = \Sigma$ is the only fixed point of the update.
Therefore, the FB algorithm is consistent for OU with Gaussian data.
\end{example}

\subsection{Backward-Forward for Langevin dynamics}
\label{App:BFLang}

\begin{example}[BF for OU with Gaussian data.]\label{Ex:OUBF}
Let $\nu = \N(\mu,\Sigma)$ as in Example~\ref{Ex:OU}, and $\epsilon \le \lambda_{\min}(\Sigma)$.
Let $\rho_0 = \N(\mu_0,\Sigma_0)$ and let $\Sigma_0 = I$ for simplicity, so $\Sigma_0$ commutes with $\Sigma$.
Along BF~\eqref{Eq:BFLang} for OU, $\rho_k = \N(\mu_k,\Sigma_k)$ stays Gaussian, and $\Sigma_k$ commutes with $\Sigma$.
Furthermore,~\eqref{Eq:BFLang} becomes: 
\begin{align*}
x_{k+\frac{1}{2}} &= \mu + (I+\epsilon \Sigma^{-1})^{-1} (x_k-\mu) \\
x_{k+1} &= \mu_k + (I + \epsilon \Sigma_k^{-1})(\mu-\mu_k) + (I + \epsilon \Sigma_k^{-1})(I-\epsilon \Sigma^{-1})^{-1} (x_k-\mu).
\end{align*}
Since $x_k \sim \N(\mu_k,\Sigma_k)$, this yields a system of updates
\begin{align*}
\mu_{k+1} &= \mu + (I+\epsilon \Sigma^{-1})^{-1}(\mu_k-\mu) \\
\Sigma_{k+1} &= \Sigma_k(I + \epsilon \Sigma_k^{-1})^2(I-\epsilon \Sigma^{-1})^{-2}.
\end{align*}
The mean converges exponentially fast to the correct mean:
$\mu_k = \mu + (I+\epsilon \Sigma^{-1})^{-k} (\mu_0-\mu) \to \mu.$
For the covariance, note that $\Sigma_k = \Sigma_{k+1} = \Sigma$ is the only fixed point of the update.
Therefore, the BF algorithm is consistent for OU with Gaussian data.
\end{example}

\end{document}